\documentclass[12pt]{amsart}
\usepackage[latin1]{inputenc}
\usepackage{indentfirst}
\usepackage{color}
\usepackage[mathscr]{eucal}
\usepackage{enumerate}
\usepackage{amssymb}
\usepackage[all]{xy}
\usepackage{hyperref}


\newtheorem{thm}{Theorem}[section]
\newtheorem{cor}[thm]{Corollary}
\newtheorem{lem}[thm]{Lemma}

\newtheorem{prop}[thm]{Proposition}
\theoremstyle{definition}
\newtheorem{defn}[thm]{Definition}

\theoremstyle{remark}
\newtheorem{rem}[thm]{Remark}

\numberwithin{equation}{section}
\newcommand{\norm}[1]{\left\Vert#1\right\Vert}
\newcommand{\abs}[1]{\left\vert#1\right\vert}
\newcommand{\set}[1]{\left\{#1\right\}}

\newcommand{\To}{\longrightarrow}



\newcommand{\R}{\mathbb{R}}
\newcommand{\N}{\mathbb{N}}

\DeclareMathOperator{\Hom}{Hom}



\setcounter{tocdepth}{1}

\textwidth=36cc
\baselineskip 16pt
\textheight 620pt
\headheight 20pt
\headsep 20pt
\usepackage{color}
\usepackage{enumerate}
\parskip 0pt
\parindent 0pt
\oddsidemargin 10pt
\evensidemargin 10pt



\begin{document}


\title{Free Banach lattices generated by a lattice and projectivity}

\author[Avil\'es]{Antonio Avil\'es}
\address[Avil\'es]{Universidad de Murcia, Departamento de Matem\'{a}ticas, Campus de Espinardo 30100 Murcia, Spain
	\newline
	\href{https://orcid.org/0000-0003-0291-3113}{ORCID: \texttt{0000-0003-0291-3113} } }
\email{\texttt{avileslo@um.es}}

\author[Mart\'inez-Cervantes]{Gonzalo Mart\'inez-Cervantes}
\address[Mart\'inez-Cervantes]{Universidad de Murcia, Departamento de Matem\'{a}ticas, Campus de Espinardo 30100 Murcia, Spain
	\newline
	\href{http://orcid.org/0000-0002-5927-5215}{ORCID: \texttt{0000-0002-5927-5215} } }	

\email{gonzalo.martinez2@um.es}
%
%

\author[Rodr\'iguez Abell\'an]{Jos\'e David Rodr\'iguez Abell\'an}
\address[Rodr\'iguez Abell\'an]{Universidad de Murcia, Departamento de Matem\'{a}ticas, Campus de Espinardo 30100 Murcia, Spain 	\newline
	\href{https://orcid.org/0000-0002-2764-0070}{ORCID: \texttt{0000-0002-2764-0070} }}

\email{josedavid.rodriguez@um.es}

\author[Rueda Zoca]{Abraham Rueda Zoca}
\address[Rueda Zoca]{Universidad de Murcia, Departamento de Matem\'{a}ticas, Campus de Espinardo 30100 Murcia, Spain
	\newline
	\href{https://orcid.org/0000-0003-0718-1353}{ORCID: \texttt{0000-0003-0718-1353} }}
\email{\texttt{abraham.rueda@um.es}}
\urladdr{\url{https://arzenglish.wordpress.com}}

\thanks{}

\keywords{Banach lattice; Free Banach lattice; Projectivity}

\subjclass[2010]{Primary 46B20, 46B42, 06D05; Secondary 06B25; 54C55}

\begin{abstract} 
In this article we deal with the free Banach lattice $FBL\langle \mathbb{L} \rangle$ generated by a lattice $\mathbb{L}$. We prove that if $FBL\langle \mathbb{L} \rangle$ is projective then $\mathbb{L}$ has a maximum and a minimum. On the other hand, we show that if $\mathbb{L}$ has maximum and minimum then $FBL\langle \mathbb{L} \rangle$ is $2$-lattice isomorphic to a $C(K)$-space. As a consequence, $FBL\langle \mathbb{L} \rangle$ is projective if and only if it is lattice isomorphic to a $C(K)$-space with $K$ being an absolute neighborhood retract.
As an application, we characterize those linearly ordered sets and Boolean algebras for which the corresponding free Banach lattice is projective.
\end{abstract}

\maketitle
 
\section{Introduction}

Free and projective objects are commonly studied in many different areas of mathematics, but in the case of Banach lattices this study, initiated by B. de Pagter and A.W. Wickstead \cite{dPW15}, is still in its early stages. 
The free Banach lattice $FBL(A)$ generated by a set $A$ is a Banach lattice together with a bounded map $\delta_A \colon  A \longrightarrow FBL(A) $ with the property that for every bounded map $T\colon  A \longrightarrow X$, with $X$ being a Banach lattice, there exists a unique Banach lattice homomorphism $\hat T\colon  FBL(A) \longrightarrow X$ such that $T=\hat T \circ \delta_A$ and $\| \hat T \|=\sup\{\|Ta\|: a\in A\}$.
De Pagter and Wickstead proved the existence and uniqueness up to Banach lattice isometries of the free Banach lattice. Nevertheless, as they highlighted, describing the norm in concrete and readily identifiable terms is not so easy.
A useful representation of $FBL(A)$ and its norm was given in \cite{ART18}. For $a \in A$, let $\delta_a\colon[-1,1]^A \longrightarrow \mathbb{R}$ be the {\it evaluation function} given by $\delta_a(x^*) = x^*(a)$ for every $x^* \in [-1,1]^A$. For $f\colon[-1,1]^A \longrightarrow \mathbb{R}$, define the norm
\begin{equation*} 
\|f\|_{FBL(A)} = \sup \set{\sum_{i = 1}^n \abs{ f(x_{i}^{\ast})} :  n \in \mathbb{N}, \, x_1^{\ast}, \ldots, x_n^{\ast} \in [-1,1]^A, \text{ }\sup_{a \in A} \sum_{i=1}^n \abs{x_i^{\ast}(a)} \leq 1 }.
\end{equation*} 
Then, the free Banach lattice generated by $A$ is (up to isometries) the Banach lattice generated by the set $\{\delta_a : a\in A\}$ inside the Banach lattice of functions in $\mathbb{R}^{[-1,1]^A}$ with finite norm $\| \cdot \|_{FBL(A)}$, endowed with the norm $\| \cdot \|_{FBL(A)}$, the pointwise order and the pointwise operations. The map $\delta_A\colon  A \longrightarrow FBL(A)$ is given by the formula $\delta_A(a)=\delta_a$ for every $a\in A$.
Since $FBL(A)$ is generated by $\{\delta_a : a\in A\}$, and every such function is continuous (with respect to the product topology), it is immediate that every function in $FBL(A) \subset \mathbb{R}^{[-1,1]^A}$ is continuous and positively homogeneous, i.e.~commutes with positive scalars.

\bigskip

Free Banach lattices were later generalized by endowing the set of generators with some extra structure. Namely, in \cite{ART18} the free Banach lattice generated by a Banach space was considered, a construction that has been recently generalized in \cite{jjttt} by restricting to certain subcategories of Banach lattices, like $p$-convex Banach lattices for $1<p<\infty$. In this paper we will focus on the free Banach lattice generated by a lattice, a construction introduced in \cite{ARA18}.

Given a lattice $\mathbb{L}$, the free Banach lattice $FBL\langle \mathbb{L} \rangle$ generated by $\mathbb{L}$ is a Banach lattice together with a lattice homomorphism with bounded rank $\delta_\mathbb{L} \colon  \mathbb{L} \longrightarrow FBL\langle \mathbb{L} \rangle$ with the property that for every norm-bounded lattice homomorphism $T\colon   \mathbb{L} \longrightarrow X$, with $X$ being a Banach lattice, there exists a unique Banach lattice homomorphism $\hat T\colon  FBL\langle \mathbb{L} \rangle \longrightarrow X$ such that $T=\hat T \circ \delta_\mathbb{L}$ and $\| \hat T \|=\|T\|:=\sup\{\|Tx\|: x\in \mathbb{L} \}$.

Notice that the lattice $\mathbb{L}$ is distributive if and only if $\delta_\mathbb{L}$ is injective \cite[Proposition 3.1]{ARA18} and therefore provides a lattice embedding of $\mathbb{L}$ 
into $FBL\langle \mathbb{L} \rangle $. Indeed, a lattice $\mathbb{L}$ can be lattice embedded into a Banach lattice if and only if $\mathbb{L}$ is distributive. 
Bearing in mind that every free Banach lattice generated by a lattice is Banach lattice isometric to a free Banach lattice generated by a distributive lattice \cite[Proposition 3.2]{ARA18}, there is no loss of generality in dealing only with distributive lattices.
Throughout this paper every lattice is assumed to be distributive.

\bigskip

We will use the representation of $FBL\langle \mathbb{L} \rangle $ given in \cite{ARA18}. For any lattice $\mathbb{L}$ we denote by $\mathbb{L}^{\ast}$ the set $$\mathbb{L}^{\ast} = \set {x^{\ast}\colon \mathbb{L} \longrightarrow [-1,1] : x^{\ast} \text{ is a lattice homomorphism}}.$$ For every $x \in \mathbb{L}$ consider the evaluation function $\delta_x \colon \mathbb{L}^{\ast} \longrightarrow \mathbb{R}$ given by $\delta_x(x^{\ast}) = x^{\ast}(x)$, and for $f \in \mathbb{R}^{\mathbb{L}^{\ast}}$, define $$ \norm{f}_{FBL\langle \mathbb{L} \rangle} = \sup \set{\sum_{i = 1}^n \abs{ f(x_{i}^{\ast})} : n \in \mathbb{N}, \text{ } x_1^{\ast}, \ldots, x_n^{\ast} \in \mathbb{L}^{\ast}, \text{ }\sup_{x \in \mathbb{L}} \sum_{i=1}^n \abs{x_i^{\ast}(x)} \leq 1 }.$$ 

Then, $FBL\langle \mathbb{L} \rangle$ is the Banach lattice generated by the set $\{\delta_x: x\in \mathbb{L}\}$ inside the Banach lattice of all functions $f\in \R ^{\mathbb{L}^*}$ with $\|f\|<\infty$, endowed with the pointwise order and the pointwise operations.
The function $\delta_{\mathbb{L}}$ is given by the formula $\delta_{\mathbb{L}}(x)=\delta_x$ for every $x \in \mathbb{L}$.

As we have pointed out, the study of projective Banach lattices was probably initiated by B.~de Pagter and A.~W.~Wickstead in \cite{dPW15} and, since its appearance, a considerable effort has been done in order to study this property in different settings (see the papers \cite{amr20projectivity,dPW15} for a characterization of projectivity in $C(K)$ spaces, \cite{amr20projectivity,amr20c0} for results in free Banach lattices generated by Banach spaces, \cite{ara19} for results in free Banach lattices generated by lattices and \cite{jjttt} for results in free Banach lattices generated by $p$-convex Banach spaces).

The aim of this paper is to continue the line of the paper \cite{ara19} and to give a characterization of when the free Banach lattice generated by a lattice $FBL\langle \mathbb L\rangle$ is projective in terms of a condition involving $\mathbb{L}$. To this end, Section \ref{SectionFBLandC(K)} is devoted to the relation between the free Banach lattice generated by a lattice and $C(K)$-spaces (this section is of its own interest, but this connection is used later to describe  projective free Banach lattices generated by a lattice). Using the classical characterization of $C(K)$-spaces in terms of $AM$-spaces with order unit, we obtain the main theorem of the section, which asserts that the free Banach lattice $FBL\langle \mathbb{L} \rangle$ generated by a lattice $\mathbb{L}$ is lattice isomorphic to a $C(K)$-space whenever the lattice $\mathbb{L}$ is bounded (see Theorem \ref{TheoremLatticeIsomorphicToC(K)}).

In Section \ref{SectionProjectivity} we study the projectivity in the setting of free Banach lattices generated by a lattice. We show that a necessary condition for $FBL\langle \mathbb{L} \rangle $ to be $\infty$-projective is that $\mathbb{L}$ is bounded (Theorem \ref{theoProjectivityboundedness}). As a consequence of these results and the characterization of projective $C(K)$-spaces obtained by de Pagter, Wickstead, and the first three authors, we conclude that $FBL\langle \mathbb{L} \rangle $ is $\infty$-projective if and only if it is lattice isomorphic to a $C(K)$-space with $K$ being an ANR (Corollary \ref{CorollaryProjectivity}).
Finally, we apply this characterization to obtain several examples of $\infty$-projective free Banach lattices in Section \ref{SectionApplications}. In particular, this is the case for the free Banach lattice generated by a bounded free lattice (Lemma \ref{LemmaProjectivityFreeLatticeWithMinimumAndMaximum}). We characterize those linearly ordered sets $\mathbb{L}$ generating an $\infty$-projective free Banach lattice as those lattices which are countable and bounded (Theorem \ref{TheoremProjectivityOfTheFBLGeneratedByALinearOrder}), and we show that the free Banach lattice generated by an infinite Boolean algebra is never $\infty$-projective (Theorem \ref{TheoremProjectivityOfTheFBLGeneratedByABooleanAlgebra}). 

Our method of proving that certain Banach lattices $FBL\langle\mathbb{L}\rangle$ are $\infty$-projective is through an isomorphism with a $C(K)$-space that is $1^+$-projective. Since the norm of the isomorphism is easily checked to be $2$, these will be in fact $2^+$-projective Banach lattices. We do not know what the optimal projectivity constants are, with one exception: it was proved in \cite{ara19} that if $\mathbb{L}$ is a finite lattice, then $FBL\langle\mathbb{L}\rangle$ is $1^+$-projective.  It was also claimed in \cite{ara19} that $FBL\langle \mathbb{L}\rangle$ cannot be $1^+$-projective for any linearly ordered set, but there is a gap in one part of the proof that affects precisely to the case when $\mathbb{L}$ is countable and bounded. No example of an $\infty$-projective Banach lattice that is not $1^+$-projective is known \cite[Section 4]{amr20projectivity}.

\section{Relation between $FBL\langle \mathbb{L} \rangle$ and $C(K)$-spaces}
\label{SectionFBLandC(K)}

This section is devoted to prove that if $\mathbb{L}$ is a lattice with minimum and maximum, then the Banach lattice $FBL\langle \mathbb{L} \rangle$ is $2$-isomorphic to a $C(K)$-space. 
First, we recall some classical concepts in the theory of Banach lattices.

\begin{defn}
	Let $X$ be a Banach lattice and $u \in X$.
	\begin{enumerate}
		
		\item We say that $X$ is an \textit{AM-space} if $\| x \vee y \| = \max \{ \|x\|, \|y\| \}$ for every $x, y \in X^+$ with $x \wedge y = 0$.
		
		\item We say that $u$ is an \textit{order unit} (or \textit{unit}) if $u \geq 0$ and for every $x \in X$ there exists $\lambda \geq 0$ such that $|x| \leq \lambda u$.
		
		\item The \textit{principal ideal generated by $u$} is the vector subspace $$X_u = \{ x \in X : \text{there exists }\lambda \geq 0 \text{ such that }|x| \leq \lambda |u| \}.$$
		
	\end{enumerate}
	
\end{defn}

\begin{rem}
	Notice that $u$ is an order unit if and only if $X = X_u$. Moreover, if $X$ is an AM-space with order unit $u$, then the formula $\|x \|_{\infty} = \inf \{ \lambda \geq 0 : |x| \leq \lambda u \}$ defines an equivalent lattice norm on $X$ (see \cite[page 94]{AA}).
\end{rem}

The following two results provide a well known useful  characterization of $C(K)$-spaces.

\begin{thm}[\mbox{\cite[Theorem 3.4]{AA}}]\label{Theorem3.4AA}
	Let $X$ be a Banach lattice and let $X_u$ be the principal ideal generated by a vector $u \in X$. Then $X_u$ with the norm  $\|x \|_{\infty} = \inf \{ \lambda \geq 0 : |x| \leq \lambda |u| \}$ is an AM-space with order unit $|u|$.
\end{thm}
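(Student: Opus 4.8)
The plan is to establish, as three separate subgoals, that $\|\cdot\|_\infty$ is a complete lattice norm on $X_u$, that $|u|$ is an order unit for it, and that the AM-identity holds. Since $X_u = X_{|u|}$ and the formula for $\|\cdot\|_\infty$ involves only $|u|$, there is no loss of generality in assuming $u \geq 0$ from the outset, so I write $u$ for $|u|$ throughout.

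First I would check that $\|\cdot\|_\infty$ is a lattice norm. Finiteness on $X_u$ is exactly the defining condition of the principal ideal. The crucial point is that the infimum defining $\|x\|_\infty$ is attained, i.e. $|x| \leq \|x\|_\infty\, u$: if $|x| \leq \lambda_n u$ with $\lambda_n \downarrow \|x\|_\infty$, then $|x| - \|x\|_\infty u \leq \varepsilon u$ for every $\varepsilon > 0$, and the Archimedean property of the Banach lattice $X$ forces $|x| \leq \|x\|_\infty u$. Positive-definiteness follows from the same argument ($|x| \leq \lambda u$ for all $\lambda > 0$ gives $x = 0$), while absolute homogeneity, the triangle inequality, and monotonicity ($|x| \leq |y| \Rightarrow \|x\|_\infty \leq \|y\|_\infty$) are immediate from $|x+y| \leq |x| + |y|$ and transitivity of $\leq$.

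With attainment in hand, the closed unit ball of $(X_u, \|\cdot\|_\infty)$ is precisely the order interval $[-u,u]$, so $|x| \leq \|x\|_\infty u$ for every $x \in X_u$ and hence $u$ is an order unit. The AM-identity is then almost formal: for $x,y \geq 0$ put $M = \max\{\|x\|_\infty, \|y\|_\infty\}$; monotonicity gives $\|x \vee y\|_\infty \geq M$, while $x \leq Mu$ and $y \leq Mu$ make $Mu$ a common upper bound, whence $x \vee y \leq Mu$ and $\|x \vee y\|_\infty \leq M$. (This in fact holds for all positive $x,y$, not only disjoint ones, so the defining inequality of an AM-space is satisfied a fortiori.)

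The step I expect to require the most care is completeness. Since $\|\cdot\|_X$ is a lattice norm, $|x| \leq \|x\|_\infty u$ yields $\|x\|_X \leq \|u\|_X \, \|x\|_\infty$, so any $\|\cdot\|_\infty$-Cauchy sequence $(x_n)$ is $\|\cdot\|_X$-Cauchy and converges in $X$ to some $x$. To see that $x \in X_u$ and $\|x_n - x\|_\infty \to 0$, fix $\varepsilon > 0$ and $N$ with $|x_n - x_m| \leq \varepsilon u$ for $n,m \geq N$; letting $m \to \infty$ and using that order intervals are norm-closed in a Banach lattice (the positive cone is closed) gives $|x_n - x| \leq \varepsilon u$ for $n \geq N$. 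This simultaneously shows $x = x_n - (x_n - x) \in X_u$ and $\|x_n - x\|_\infty \leq \varepsilon$, establishing that $(X_u, \|\cdot\|_\infty)$ is complete and therefore an AM-space with order unit $u = |u|$.
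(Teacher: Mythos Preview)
Your proof is correct and follows the standard textbook argument. However, note that the paper does not give its own proof of this statement: it is quoted verbatim as \cite[Theorem~3.4]{AA} and used as a black box in the proof of Theorem~\ref{TheoremLatticeIsomorphicToC(K)}. There is therefore nothing in the paper to compare your argument against; your write-up is essentially the proof one finds in Abramovich--Aliprantis, relying on the Archimedean property (automatic in a Banach lattice) to get attainment of the infimum, and on closedness of order intervals to transfer the $\|\cdot\|_X$-limit back into $X_u$ with $\|\cdot\|_\infty$-convergence.
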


\begin{thm}[\mbox{\cite[Theorem 12.28]{AliprantisBurkinshaw}}]\label{Theorem12.28AB}
	A Banach lattice $X$ is an AM-space with order unit if and only if $X$ endowed with the norm $\|\cdot\|_{\infty}$ induced by an order unit is lattice isometric to some $C(K)$-space.
\end{thm}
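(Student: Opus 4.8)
The plan is to prove the classical Kakutani--Bohnenblust--Krein representation. Write $u$ for the order unit. By Theorem \ref{Theorem3.4AA} applied with $X_u = X$, I may assume from the outset that the norm of $X$ is the unit norm $\|x\|_\infty = \inf\{\lambda \ge 0 : |x| \le \lambda u\}$, so that the closed unit ball of $X$ is exactly the order interval $[-u,u]$ and $\|u\|_\infty = 1$. The backward implication is then immediate: the supremum norm makes $C(K)$ an AM-space with order unit the constant function $\mathbf{1}$ (disjoint nonnegative functions have disjoint supports, so $\|f \vee g\|_\infty = \max\{\|f\|_\infty,\|g\|_\infty\}$), and any lattice isometry transports this AM-identity and the order unit back to $X$. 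So the whole content is the forward direction.

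For the forward direction I would set
\[
K = \{x^\ast \in X^\ast : x^\ast \ge 0,\ x^\ast(u) = 1,\ x^\ast \text{ is a lattice homomorphism}\},
\]
equipped with the weak$^\ast$ topology, and define $J\colon X \to C(K)$ by $J(x)(x^\ast) = x^\ast(x)$. First I would check that $K$ is a compact Hausdorff space: each defining condition ($x^\ast \ge 0$, $x^\ast(u)=1$, and $x^\ast(a \vee b) = x^\ast(a) \vee x^\ast(b)$ for all $a,b$) is weak$^\ast$-closed, while positivity together with $x^\ast(u)=1$ forces $\|x^\ast\| = 1$ since $|x^\ast(x)| \le x^\ast(|x|) \le \|x\|_\infty\, x^\ast(u)$; hence $K$ is a weak$^\ast$-closed subset of the dual unit ball and Banach--Alaoglu applies. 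Then $J$ is linear, and because every $x^\ast \in K$ commutes with the lattice operations, $J$ is a lattice homomorphism with $J(u) = \mathbf{1}$ and $\|J(x)\|_\infty = \sup_{x^\ast \in K}|x^\ast(x)| \le \|x\|_\infty$, so $J$ is a contraction.

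The heart of the matter is that $J$ is in fact an isometry, that is $\|x\|_\infty \le \sup_{x^\ast \in K}|x^\ast(x)|$, and this requires producing enough lattice homomorphisms. I would argue through the dual ball: since the unit ball of $X$ is $[-u,u]$, the bipolar theorem gives $\|x\|_\infty = \sup\{|x^\ast(x)| : x^\ast \in B^\ast\}$, where $B^\ast$ is the weak$^\ast$-compact convex dual unit ball. The affine weak$^\ast$-continuous map $x^\ast \mapsto x^\ast(x)$ attains its extrema on $B^\ast$ at extreme points (Bauer's maximum principle), so it suffices to show that every extreme point of $B^\ast$ lies in $K \cup (-K)$; equivalently, that every extreme point of the state space $S = \{x^\ast \ge 0 : x^\ast(u)=1\}$ is a lattice homomorphism. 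This is the main obstacle, and essentially all the work lies here. The key lemma I would prove is that, for $\phi \in S$, being extreme in $S$ is equivalent to the order interval $[0,\phi] = \{\psi \in X^\ast : 0 \le \psi \le \phi\}$ reducing to the segment $\{t\phi : 0 \le t \le 1\}$: a nontrivial $\psi \in [0,\phi]$ yields, with $t = \psi(u)$, the proper convex splitting $\phi = t(\psi/t) + (1-t)((\phi-\psi)/(1-t))$ inside $S$, and conversely. It then remains to identify this one-dimensionality with being a lattice homomorphism: if $\phi$ is a homomorphism its kernel is an ideal contained in $\ker\psi$ for every $0 \le \psi \le \phi$, so $\psi$ factors through the one-dimensional quotient $X/\ker\phi$; conversely, if $\phi$ is not a lattice homomorphism there are disjoint $a,b \ge 0$ with $\phi(a),\phi(b) > 0$, and using the Dedekind completeness of $X^\ast$ one extracts a component $\psi \in [0,\phi]$ of $\phi$ with $\psi(a)=\phi(a)$ and $\psi(b)=0$, which is not proportional to $\phi$.

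Finally I would deduce surjectivity. Being an isometry with $X$ complete, $J(X)$ is a closed linear sublattice of $C(K)$; it contains the constants because $J(u) = \mathbf{1}$, and it separates the points of $K$ since distinct functionals differ on some $x$. By the lattice form of the Stone--Weierstrass theorem, $J(X)$ is dense, hence equal to $C(K)$. Thus $J$ is a surjective lattice isometry and $(X,\|\cdot\|_\infty)$ is lattice isometric to $C(K)$, which completes the forward direction. The delicate points to watch are the weak$^\ast$-closedness and the norming used for compactness of $K$, and above all the extreme-point lemma, which carries the genuine difficulty of the theorem.
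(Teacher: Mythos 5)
The paper gives no proof of this statement at all---it is quoted directly from Aliprantis--Burkinshaw, and your sketch correctly reproduces the standard Kakutani--Bohnenblust--Krein argument used in that cited source: reduce to the order-unit norm so the unit ball is $[-u,u]$, take $K$ to be the weak$^\ast$-compact set of normalized positive lattice homomorphisms, prove the isometry by identifying extreme points of the state space with lattice homomorphisms (via the key lemma that extremality of $\phi$ is equivalent to $[0,\phi]=\{t\phi : 0\le t\le 1\}$), and finish with the lattice form of Stone--Weierstrass. The one step you assert without justification---that every extreme point of the dual ball $B^\ast$ lies in $S\cup(-S)$---is genuinely needed but follows routinely from the AL-structure of the dual, since $\|\varphi\|=\varphi^+(u)+\varphi^-(u)$ lets one split any $\varphi$ with $\varphi^+\neq 0\neq\varphi^-$ as a proper convex combination in $B^\ast$; with that observation your outline is a correct proof in the same spirit as the reference the paper relies on.
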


\begin{lem}\label{LemmaOrderUnit}
	Let $\mathbb{L}$ be a lattice with minimum $m$ and maximum $M$. Then, $|\delta_m| \vee |\delta_M|$ is an order unit in $FBL\langle \mathbb{L} \rangle$.
\end{lem}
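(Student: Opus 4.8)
The plan is to prove the stronger pointwise inequality $\abs{f} \le \norm{f}_{FBL\langle \mathbb{L} \rangle}\cdot u$ for every $f \in FBL\langle \mathbb{L} \rangle$, where $u = \abs{\delta_m}\vee\abs{\delta_M}$; since $u \ge 0$ pointwise, taking $\lambda = \norm{f}_{FBL\langle \mathbb{L} \rangle}$ then shows immediately that $u$ is an order unit. First I would record that every $x^{\ast}\in\mathbb{L}^{\ast}$ is monotone, because a lattice homomorphism preserves $\wedge$ and hence the order; as $m\le x\le M$ for all $x\in\mathbb{L}$, this forces $x^{\ast}(m)\le x^{\ast}(x)\le x^{\ast}(M)$. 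Thus every value $x^{\ast}(x)$ lies in the interval $[x^{\ast}(m),x^{\ast}(M)]$, whence
\[
\sup_{x\in\mathbb{L}}\abs{x^{\ast}(x)} = \max\{\abs{x^{\ast}(m)},\abs{x^{\ast}(M)}\} = u(x^{\ast}).
\]
In particular $u(x^{\ast})\le 1$, and $u(x^{\ast})=0$ forces $x^{\ast}$ to be the zero homomorphism.

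The key step is a rescaling argument. Fix $x^{\ast}\in\mathbb{L}^{\ast}$ with $t:=u(x^{\ast})>0$ and set $y^{\ast}:=t^{-1}x^{\ast}$. Positive scalar multiples of lattice homomorphisms are again lattice homomorphisms, and since the displayed identity gives $\abs{x^{\ast}(x)}\le t$ for all $x$, the functional $y^{\ast}$ takes values in $[-1,1]$; hence $y^{\ast}\in\mathbb{L}^{\ast}$ with $\sup_{x}\abs{y^{\ast}(x)}=1$. Taking $n=1$ and the single functional $y^{\ast}$ in the definition of $\norm{\cdot}_{FBL\langle \mathbb{L} \rangle}$, the admissibility constraint holds, so $\norm{f}_{FBL\langle \mathbb{L} \rangle}\ge\abs{f(y^{\ast})}$. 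Finally I would invoke that every element of $FBL\langle \mathbb{L} \rangle$ is positively homogeneous — this holds for the generators $\delta_x$, is preserved by the vector and lattice operations, and passes to norm limits because the norm dominates the supremum over $\mathbb{L}^{\ast}$ — to write $f(x^{\ast})=f(t\,y^{\ast})=t\,f(y^{\ast})$. Combining, $\abs{f(x^{\ast})}=t\,\abs{f(y^{\ast})}\le u(x^{\ast})\,\norm{f}_{FBL\langle \mathbb{L} \rangle}$. When $u(x^{\ast})=0$ the functional vanishes identically, and then so does $f(x^{\ast})$, so the inequality is trivial there; this establishes $\abs{f}\le\norm{f}_{FBL\langle \mathbb{L} \rangle}\cdot u$ everywhere.

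This yields the lemma, and as a byproduct the comparison $\norm{f}_{\infty}\le\norm{f}_{FBL\langle \mathbb{L} \rangle}$ between the order-unit norm induced by $u$ and the free norm, which I expect to reuse in the proof of Theorem \ref{TheoremLatticeIsomorphicToC(K)}. The hypothesis that $\mathbb{L}$ is bounded enters precisely through the identity $\sup_{x}\abs{x^{\ast}(x)}=u(x^{\ast})$: it is the presence of $m$ and $M$ that makes $u$ dominate every evaluation $\delta_x$ uniformly and that keeps the rescaled functional $y^{\ast}$ inside $\mathbb{L}^{\ast}$. The only delicate point I anticipate is the justification that positive homogeneity survives the norm closure; this is routine but must be stated explicitly, since the inequality is transparent only on the vector sublattice generated by $\{\delta_x : x\in\mathbb{L}\}$ and is then transported to all of $FBL\langle \mathbb{L} \rangle$ via density together with the pointwise control supplied by the norm.
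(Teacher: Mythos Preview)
Your proof is correct and follows essentially the same approach as the paper: both establish the sharper inequality $\abs{f}\le\norm{f}_{FBL\langle\mathbb{L}\rangle}\,u$ by using positive homogeneity to rescale an arbitrary $x^{\ast}$ so that $u(x^{\ast})=1$ and then invoking the single-functional case of the norm definition. The only differences are cosmetic---the paper argues by contradiction and is terser, while you argue directly and spell out the monotonicity of $x^{\ast}$, the verification that $y^{\ast}\in\mathbb{L}^{\ast}$, the degenerate case $u(x^{\ast})=0$, and the passage of positive homogeneity to the norm closure.
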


\begin{proof}
	We are going to see that $|f| \leq \|f\| (|\delta_m| \vee |\delta_M |)$ for every $f \in FBL \langle \mathbb{L} \rangle$. Suppose, by contradiction, that there is $f \in FBL \langle \mathbb{L} \rangle$ with $|f| > \|f\| (|\delta_m| \vee |\delta_M |)$. Then, there exists $x^* \in \mathbb{L}^*$ such that $|f(x^*)| > \|f\| (|x^*(m)| \vee |x^*(M)|)$. Since functions in $FBL\langle \mathbb{L} \rangle$ are positively homogeneous, we can suppose that $x^*(m) = -1$ or $x^*(M) = 1$. But then $|f(x^*)| > \| f \|$, which is impossible.
\end{proof}

In order to give an explicit compact space $K$ for which $C(K)$ is lattice isomorphic to $FBL \langle \mathbb{L} \rangle$, we need to characterize $\Hom (FBL\langle \mathbb{L} \rangle, \R)$, i.e.~ the set of real lattice homomorphisms in $FBL \langle \mathbb{L} \rangle^*$.

\begin{prop}
	\label{PropLatticeHomom}
	Let $\mathbb{L}$ be a lattice. For $x^* \in \mathbb{L}^*$, define $\delta_{x^*} \in FBL\langle \mathbb{L} \rangle^*$ by the formula  $\delta_{x^*}(f) := f(x^*)$ for every $f\colon \mathbb{L}^* \longrightarrow \R$ in $FBL\langle \mathbb{L} \rangle$. Then, $\Hom (FBL\langle \mathbb{L} \rangle, \R) = \{ \lambda \delta_{x^*} : \lambda \geq 0 \mbox{ and } x^* \in \mathbb{L}^* \}$.
\end{prop}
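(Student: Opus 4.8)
The plan is to prove the two inclusions separately, the first being routine and the second carrying the real content.

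First I would verify $\supseteq$. Since the lattice operations on $FBL\langle \mathbb{L} \rangle$ are computed pointwise, the evaluation functional $\delta_{x^*}$ is linear and satisfies $\delta_{x^*}(f\vee g)=(f\vee g)(x^*)=\max\{f(x^*),g(x^*)\}=\delta_{x^*}(f)\vee\delta_{x^*}(g)$, and likewise for $\wedge$; hence $\delta_{x^*}\in\Hom(FBL\langle\mathbb{L}\rangle,\R)$. Multiplying by a scalar $\lambda\ge 0$ preserves this, because $\lambda(a\vee b)=(\lambda a)\vee(\lambda b)$ when $\lambda\ge 0$. This gives the inclusion $\{\lambda\delta_{x^*}:\lambda\ge0,\ x^*\in\mathbb{L}^*\}\subseteq\Hom(FBL\langle\mathbb{L}\rangle,\R)$.

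For the reverse inclusion, I would take $\phi\in\Hom(FBL\langle\mathbb{L}\rangle,\R)$ and set $C:=\|\phi\|$. The key observation is that $y^*:=\phi\circ\delta_{\mathbb{L}}\colon\mathbb{L}\to\R$ is a lattice homomorphism, being a composition of the lattice homomorphism $\delta_{\mathbb{L}}$ with $\phi$. Moreover it is bounded: since $\|\delta_x\|\le 1$ for every $x\in\mathbb{L}$ (a single point $x^*$ gives $\sum_i|x_i^*(x)|\le\sup_{z\in\mathbb{L}}\sum_i|x_i^*(z)|\le1$ in the defining supremum of the norm), we get $|y^*(x)|=|\phi(\delta_x)|\le C$. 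If $C=0$ then $\phi=0=0\cdot\delta_{x^*}$ for any $x^*\in\mathbb{L}^*$. If $C>0$, then $x^*:=y^*/C$ is again a lattice homomorphism and now takes values in $[-1,1]$, so $x^*\in\mathbb{L}^*$.

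It then remains to check $\phi=C\,\delta_{x^*}$. Both are bounded linear lattice homomorphisms, and they agree on the generators, since $\phi(\delta_x)=y^*(x)=C\,x^*(x)=C\,\delta_x(x^*)=C\,\delta_{x^*}(\delta_x)$. Because $FBL\langle\mathbb{L}\rangle$ is by definition the closed sublattice generated by $\{\delta_x:x\in\mathbb{L}\}$, two continuous maps that are linear, preserve $\vee$ and $\wedge$, and coincide on the $\delta_x$ must coincide on the whole lattice-linear span and hence, by continuity, everywhere. The main obstacle I expect is precisely this last agreement step: one must argue that equality on the generators propagates to the generated sublattice (using that both maps respect linear combinations and the two lattice operations) and then to its closure (using boundedness). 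The only other point requiring care is the normalization ensuring $y^*/C\in[-1,1]^{\mathbb{L}}$, which is exactly where the bound $\|\delta_x\|\le1$ and the value $C=\|\phi\|$ are used.
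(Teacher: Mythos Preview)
Your proposal is correct and follows essentially the same approach as the paper's proof: both compose $\phi$ with $\delta_{\mathbb{L}}$ to produce an element of $\mathbb{L}^*$ (after a normalization so that the range lands in $[-1,1]$) and then conclude by uniqueness on the generating set. The only cosmetic differences are that the paper normalizes $\phi$ itself (assuming $\|\phi\|\le 1$) rather than dividing $y^*$ by $C$, and it packages your final ``agreement on generators propagates'' step as a direct appeal to the uniqueness clause in the universal property of $FBL\langle\mathbb{L}\rangle$.
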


\begin{proof}
	It is immediate that every element of the form  $\lambda \delta_{x^*}$ with  $\lambda \geq 0$  and $x^* \in \mathbb{L}^*$ is a real lattice homomorphism in  $FBL \langle \mathbb{L} \rangle^*$, so it is enough to prove that $\Hom (FBL\langle \mathbb{L} \rangle, \R) \subseteq \{ \lambda \delta_{x^*} : \lambda \geq 0 \mbox{ and } x^* \in \mathbb{L}^* \}$.

	Let $\varphi \in \Hom(FBL\langle \mathbb{L} \rangle,\R)$. Multiplying $\varphi$ by a small enough positive constant if necessary, we can assume without loss of generality that $\|\varphi\|\leq 1$. Set $x^* := \varphi \circ \delta_\mathbb{L}$. Notice that $x^* \in \mathbb{L}^*$. 
	Now, since $\varphi$ and $\delta_{x^*}$ are Banach lattice homomorphisms such that $\varphi \circ \delta_\mathbb{L} = \delta_{x^*} \circ \delta_\mathbb{L} =x^*$, by the uniqueness of the extension we have that $\varphi = \delta_{x^*}$.
	Therefore, $\Hom (FBL\langle \mathbb{L} \rangle, \R) = \{ \lambda \delta_{x^*} : \lambda \geq 0 \mbox{ and } x^* \in \mathbb{L}^* \}$.	
\end{proof}

We are ready to prove the following theorem.

\begin{thm}\label{TheoremLatticeIsomorphicToC(K)}
	Let $\mathbb{L}$ be a lattice with minimum $m$ and maximum $M$. Then, $FBL \langle \mathbb{L} \rangle$ is $2$-lattice isomorphic to $C(K_\mathbb{L})$, where  $K_\mathbb{L} = \{ x^* \in \mathbb{L}^* : \max\{|x^*(m)|, |x^*(M)|\} = 1 \}\subseteq [-1,1]^\mathbb{L}$ is endowed with the product topology.
\end{thm}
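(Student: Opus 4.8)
The plan is to realise $FBL\langle\mathbb{L}\rangle$ as an AM-space with order unit, apply the Kakutani-type representation furnished by Theorems \ref{Theorem3.4AA} and \ref{Theorem12.28AB}, and then pin down the compact space using the description of the lattice homomorphisms in Proposition \ref{PropLatticeHomom}. First I would set $u := |\delta_m|\vee|\delta_M|$. By Lemma \ref{LemmaOrderUnit} this is an order unit, so $FBL\langle\mathbb{L}\rangle$ coincides with the principal ideal it generates, and Theorem \ref{Theorem3.4AA} shows that $FBL\langle\mathbb{L}\rangle$ with the norm $\|f\|_\infty=\inf\{\lambda\geq 0:|f|\leq\lambda u\}$ is an AM-space with order unit $u$. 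Theorem \ref{Theorem12.28AB} then yields a lattice isometry of $(FBL\langle\mathbb{L}\rangle,\|\cdot\|_\infty)$ onto a space $C(K)$; since for any $C(K)$ the compact space is intrinsically recovered as its normalized point evaluations, we may and do take $K=\{\varphi\in\Hom(FBL\langle\mathbb{L}\rangle,\R):\varphi(u)=1\}$ with the weak$^*$ topology, the isometry being $f\mapsto(\varphi\mapsto\varphi(f))$.

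The heart of the argument is to identify $K$ with $K_\mathbb{L}$. By Proposition \ref{PropLatticeHomom} every $\varphi\in K$ has the form $\lambda\delta_{x^*}$ with $\lambda\geq 0$ and $x^*\in\mathbb{L}^*$, so I would study the map $\Phi\colon K_\mathbb{L}\to K$ given by $\Phi(x^*)=\delta_{x^*}$. It is well defined because on $K_\mathbb{L}$ one has $\delta_{x^*}(u)=|x^*(m)|\vee|x^*(M)|=1$, and injective because the functions $\delta_a$ $(a\in\mathbb{L})$ separate the points of $\mathbb{L}^*$. For surjectivity, given $\varphi=\lambda\delta_{z^*}\in K$ the normalization forces $\lambda(|z^*(m)|\vee|z^*(M)|)=1$; the key point is that a lattice homomorphism is order preserving, so $z^*(m)\leq z^*(a)\leq z^*(M)$ for all $a\in\mathbb{L}$, whence $\lambda z^*$ still takes values in $[-1,1]$ and therefore $\lambda z^*\in K_\mathbb{L}$, while positive homogeneity of the functions in $FBL\langle\mathbb{L}\rangle$ gives $\delta_{\lambda z^*}=\lambda\delta_{z^*}=\varphi$. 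Finally $\Phi$ is continuous, since each $x^*\mapsto f(x^*)$ is continuous for the product topology; moreover $K_\mathbb{L}$ is a closed, hence compact, subset of $[-1,1]^\mathbb{L}$ and $K$ is Hausdorff, so $\Phi$ is a homeomorphism. Transporting the isometry through $\Phi$ turns it into the restriction map $f\mapsto f\flag_{K_\mathbb{L}}$, which is thus a lattice isometry of $(FBL\langle\mathbb{L}\rangle,\|\cdot\|_\infty)$ onto $C(K_\mathbb{L})$.

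It remains to pass from $\|\cdot\|_\infty$ to the genuine free norm and to extract the constant $2$. On one side, Lemma \ref{LemmaOrderUnit} gives $|f|\leq\|f\|_{FBL\langle\mathbb{L}\rangle}\,u$, so $\|f\|_\infty\leq\|f\|_{FBL\langle\mathbb{L}\rangle}$. On the other side, if $|f|\leq\lambda u$ then for any $x_1^*,\dots,x_n^*\in\mathbb{L}^*$ with $\sup_{x\in\mathbb{L}}\sum_{i=1}^n|x_i^*(x)|\leq 1$ one estimates $\sum_{i=1}^n|f(x_i^*)|\leq\lambda\sum_{i=1}^n(|x_i^*(m)|+|x_i^*(M)|)\leq 2\lambda$, using that taking $x=m$ and $x=M$ bounds $\sum_i|x_i^*(m)|$ and $\sum_i|x_i^*(M)|$ each by $1$; hence $\|f\|_{FBL\langle\mathbb{L}\rangle}\leq 2\|f\|_\infty$. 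Combining the two inequalities, the restriction map is a lattice isomorphism of $FBL\langle\mathbb{L}\rangle$ onto $C(K_\mathbb{L})$ whose norm times inverse norm is at most $2$, which is the asserted $2$-lattice isomorphism. I expect the main obstacle to be the identification of $K$ with $K_\mathbb{L}$ in the second paragraph, and specifically the rescaling step: one must check that every normalized homomorphism $\lambda\delta_{z^*}$ actually arises from a bona fide point of $K_\mathbb{L}$, which is precisely where order preservation of lattice homomorphisms and positive homogeneity of the functions in $FBL\langle\mathbb{L}\rangle$ are needed.
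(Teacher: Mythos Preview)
Your proposal is correct and follows essentially the same route as the paper: set $u=|\delta_m|\vee|\delta_M|$, invoke Lemma~\ref{LemmaOrderUnit}, Theorem~\ref{Theorem3.4AA} and Theorem~\ref{Theorem12.28AB} to get the AM-space/Kakutani representation, then identify the Gelfand-type compact space with $K_\mathbb{L}$ via Proposition~\ref{PropLatticeHomom}. The only cosmetic differences are that the paper obtains the bound $\|f\|_{FBL\langle\mathbb{L}\rangle}\leq 2\|f\|_\infty$ by quoting $\|u\|\leq 2$ rather than by your direct estimate on the defining sums, and it handles the homeomorphism $K\simeq K_\mathbb{L}$ more tersely (going in the opposite direction, $\delta_{x^*}\mapsto x^*$) without spelling out the rescaling step that you correctly identify as the substantive point.
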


\begin{proof}
	Let $u = |\delta_m| \vee |\delta_M| \in FBL \langle \mathbb{L} \rangle$. By using Lemma \ref{LemmaOrderUnit} and Theorem \ref{Theorem3.4AA} we have that $FBL \langle \mathbb{L} \rangle = FBL \langle \mathbb{L} \rangle_u$ with the norm $\|f\|_{\infty} = \inf \{ \lambda \geq 0 : |f| \leq \lambda u \}$ is an AM-space with order unit $u$. Thus, by Theorem \ref{Theorem12.28AB}, $FBL \langle \mathbb{L} \rangle$ with the norm $\| \cdot \|_{\infty}$ is lattice isometric to a $C(K)$-space. 
	In the proof of Lemma \ref{LemmaOrderUnit} it has been shown that $|f|\leq \|f\| u$ for every $f\in FBL\langle \mathbb{L} \rangle$.
	Since $\|u\|\leq \|\delta_m\| + \|\delta_M \|=2$, we have that  $\|f\|_\infty \leq \|f\| \leq \|f\|_\infty \|u\| \leq 2 \|f\|_\infty$. Thus, we conclude that $FBL \langle \mathbb{L} \rangle$ is $2$-lattice isomorphic to a $C(K)$-space.
	By the proof of \cite[Theorem 12.28]{AliprantisBurkinshaw}, $K$ can be taken as 
	$$K=\{x^* \in B_{FBL \langle \mathbb{L} \rangle^*}: x^* \mbox{ is a lattice homomorphism and } \|x^*\|_\infty=x^*(u)=1 \}$$
	with the weak*-topology.
	By Proposition \ref{PropLatticeHomom}, 
	$$K=\{\delta_{x^*} \in FBL \langle \mathbb{L} \rangle ^*: x^*\in \mathbb{L}^* \mbox{ and } \max\{|x^*(m)|, |x^*(M)|\} = 1\}.$$
	Now, since the map $j\colon K \longrightarrow K_\mathbb{L}$ sending each $\delta_{x^*}$ to $x^*$ is a continuous and bijective map between compact spaces, we conclude that $K$ is weak*-homeomorphic to $K_\mathbb{L}$ with the product topology, which concludes the proof.
\end{proof}

%

\section{Projectivity of $FBL\langle \mathbb{L} \rangle$}

\label{SectionProjectivity}

In this section we study sufficient and necessary conditions for the free Banach lattice generated by a lattice to be projective. We recall first the definition of projectivity and some basic facts about the behavior of this property.

\begin{defn}\label{lambdaprojdef}
Let $\lambda>1$ be a real number. A Banach lattice $P$ is said to be \textit{$\lambda$-projective} if whenever $X$ is a Banach lattice, $J$ a closed ideal in $X$ and $Q\colon X\longrightarrow X/J$ the quotient map, then for every Banach lattice homomorphism $T\colon P\longrightarrow X/J$ there is a Banach lattice homomorphism $\hat T\colon P\longrightarrow X$ such that $T=Q\circ \hat T$ and $\Vert \hat T\Vert\leq \lambda\Vert T\Vert$. We say that $P$ is \textit{$\infty$-projective} if there is $\lambda>1$ so that $P$ is $\lambda$-projective, and that $P$ is \textit{$1^+$-projective} if it is $(1+\varepsilon)$-projective for every $\varepsilon > 0$.
\end{defn}



The following result is well-known in the general theory of categories (see \cite[Proposition 24.6.2]{Semadeni}), and tells us that $\lambda$-projectivity is transferred to complemented Banach sublattices.

\begin{lem}\label{projcomplemented}
	Let $P$ and $P'$ be Banach lattices, and let $\hat{R}\colon P\To P'$ and $\hat{i}\colon P'\To P$ be Banach lattice homomorphisms such that $\|\hat{i}\| = \|\hat{R}\|=1$ and $\hat{R}\circ\hat{i} = id_{P'}$. If $P$ is $\lambda$-projective for some $\lambda > 1$, then $P'$ is $\lambda$-projective.
\end{lem}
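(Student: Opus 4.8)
The plan is to exploit the lifting property of $\lambda$-projectivity for $P$ and then compose the resulting lift with the homomorphisms $\hat{R}$ and $\hat{i}$ so that the retraction $\hat{R}\circ\hat{i}=id_{P'}$ cancels out the discrepancy. Concretely, suppose we are given a Banach lattice $X$, a closed ideal $J\subseteq X$ with quotient map $Q\colon X\To X/J$, and a Banach lattice homomorphism $T\colon P'\To X/J$ whose lift we must produce. First I would form the composition $T\circ\hat{R}\colon P\To X/J$, which is a Banach lattice homomorphism since both factors are, with $\|T\circ\hat{R}\|\leq\|T\|\,\|\hat{R}\|=\|T\|$.

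Since $P$ is $\lambda$-projective, there is a Banach lattice homomorphism $S\colon P\To X$ with $Q\circ S=T\circ\hat{R}$ and $\|S\|\leq\lambda\|T\circ\hat{R}\|\leq\lambda\|T\|$. The candidate lift is then $\hat{T}:=S\circ\hat{i}\colon P'\To X$, again a Banach lattice homomorphism as a composition of two such maps. I would verify the two required properties: for the norm, $\|\hat{T}\|\leq\|S\|\,\|\hat{i}\|\leq\lambda\|T\|\cdot 1=\lambda\|T\|$, using $\|\hat{i}\|=1$; and for the lifting identity, compute
\begin{equation*}
Q\circ\hat{T}=Q\circ S\circ\hat{i}=(T\circ\hat{R})\circ\hat{i}=T\circ(\hat{R}\circ\hat{i})=T\circ id_{P'}=T,
\end{equation*}
where the crucial step is the retraction hypothesis $\hat{R}\circ\hat{i}=id_{P'}$. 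This shows $\hat{T}$ lifts $T$ through $Q$ with the required norm bound, so $P'$ is $\lambda$-projective.

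This argument is entirely formal and I do not expect any genuine obstacle; the only points requiring the normalization $\|\hat{R}\|=\|\hat{i}\|=1$ are the two norm estimates, and even these could tolerate weaker bounds at the cost of a worse projectivity constant. The content of the lemma is really the categorical fact that a retract of a projective object is projective, specialized to the category of Banach lattices with the quantitative bookkeeping of the norms; the verification above is the standard diagram chase.
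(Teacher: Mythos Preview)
Your proof is correct and follows essentially the same argument as the paper's: compose the given $T$ with $\hat{R}$ to obtain a map from $P$, lift it using the $\lambda$-projectivity of $P$, and then compose the lift with $\hat{i}$, using $\hat{R}\circ\hat{i}=id_{P'}$ and the norm-one bounds exactly as you do. The only difference is notational.
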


\begin{proof}
	In order to check the $\lambda$-projectivity of $P'$, let $Q\colon X\To X/\mathcal{J}$ and $T'\colon P'\To X/\mathcal{J}$ be as in Definition~\ref{lambdaprojdef}. Then, we can apply the $\lambda$-projectivity of $P$ considering $T=T'\circ\hat{R}$, so we get $\hat{T}\colon P\To X$ such that $Q\circ \hat{T} = T'\circ\hat{R}$ and $\|\hat{T}\|\leq \lambda \|T\|\leq \lambda \|T'\|$. The desired lift is $\hat{T}' = \hat{T} \circ \hat{i}$. On the one hand $\|\hat{T}'\| \leq \|\hat{T}\| \leq \lambda \|T'\|$, and on the other hand $Q \circ \hat{T}' = Q\circ \hat{T} \circ \hat{i} = T'\circ \hat{R} \circ \hat{i} = T'$.
\end{proof}

In a similar way, we can characterize $\infty$-projective quotients of $\infty$-projective Banach lattices by the existence of a suitable isomorphic embedding. The following proposition is a variation of \cite[Theorem 10.3]{dPW15} and \cite[Proposition 2.2]{ara19}.

\begin{prop}\label{Propquotientofprojective}
	Let $P$ be a $\lambda$-projective Banach lattice, $\mathcal{I}$ a closed ideal in $P$ and $\pi\colon P\To P/\mathcal{I}$ the quotient map. The quotient $P/\mathcal{I}$ is $\infty$-projective if and only if there exists a Banach lattice homomorphism $u\colon P/\mathcal{I}\To P$ such that $\pi\circ u = id_{P/\mathcal{I}}$. 
\end{prop}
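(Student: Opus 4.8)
The plan is to prove both implications directly from the universal/lifting definition of $\lambda$-projectivity, treating the statement as a standard retract characterization. The heart of the matter is that $\pi\colon P\To P/\mathcal{I}$ is itself a quotient map onto $P/\mathcal{I}$, where $\mathcal{I}$ is a closed ideal; this is exactly the setup appearing in Definition~\ref{lambdaprojdef}, so projectivity statements can be applied with $X=P$, $J=\mathcal{I}$, and $Q=\pi$.

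For the implication asserting that the existence of a section $u$ with $\pi\circ u=id_{P/\mathcal{I}}$ forces $\infty$-projectivity of $P/\mathcal{I}$, I would try to realize $P/\mathcal{I}$ as a norm-one complemented Banach sublattice of $P$ and then invoke Lemma~\ref{projcomplemented}. Concretely, set $\hat{i}:=u$ and $\hat{R}:=\pi$; since $\pi$ is a quotient map its norm is $1$, and $\pi\circ u=id_{P/\mathcal{I}}$. The only subtlety is that Lemma~\ref{projcomplemented} requires $\|\hat i\|=1$, whereas a priori $u$ is only bounded. The standard fix is to renorm or to absorb the constant: one does not actually need $\|u\|=1$, because running the lifting argument of Lemma~\ref{projcomplemented} with a section of norm $\|u\|$ yields $\lambda\|u\|$-projectivity, which still gives $\infty$-projectivity. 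Thus if $P$ is $\lambda$-projective, then $P/\mathcal{I}$ is $(\lambda\|u\|)$-projective, hence $\infty$-projective. I expect this direction to be essentially immediate once the identification with Lemma~\ref{projcomplemented} is made.

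For the converse, assume $P/\mathcal{I}$ is $\infty$-projective, say $\mu$-projective for some $\mu>1$, and produce the section $u$. Here I would apply the $\mu$-projectivity of $P/\mathcal{I}$ to the quotient map $\pi\colon P\To P/\mathcal{I}$ itself, lifting the identity. Precisely, in Definition~\ref{lambdaprojdef} take the ambient Banach lattice to be $P$, the closed ideal to be $\mathcal{I}$, the quotient map to be $\pi$, and the homomorphism $T\colon P/\mathcal{I}\To P/\mathcal{I}$ to be $id_{P/\mathcal{I}}$. Then $\mu$-projectivity furnishes a Banach lattice homomorphism $u\colon P/\mathcal{I}\To P$ with $\pi\circ u=id_{P/\mathcal{I}}$ and $\|u\|\le\mu$. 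This $u$ is precisely the desired section, so the converse holds.

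The main obstacle, such as it is, lies not in either logical step but in bookkeeping the projectivity constants: the definition as stated only grants $\infty$-projectivity (existence of \emph{some} $\lambda$) rather than a sharp constant, so one must be careful to phrase the first implication so that the extra factor $\|u\|$ is harmless. Since the statement only claims $\infty$-projectivity of the quotient and does not track the optimal $\lambda$, absorbing $\|u\|$ into the constant causes no difficulty, and no hypothesis on $\|u\|$ beyond boundedness is needed. I would therefore organize the write-up as two short paragraphs mirroring Lemma~\ref{projcomplemented}, remarking only that the norm of the section enters multiplicatively into the resulting projectivity constant.
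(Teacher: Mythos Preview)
Your proposal is correct and follows essentially the same approach as the paper: for the direction ``$\infty$-projective $\Rightarrow$ section exists'' both you and the paper lift $id_{P/\mathcal{I}}$ through $\pi$ directly from the definition, and for ``section exists $\Rightarrow$ $\infty$-projective'' the paper writes out precisely the direct lifting argument you describe (compose the lift $S$ of $T\circ\pi$ with $u$), obtaining $(\lambda\|u\|)$-projectivity. The only cosmetic difference is that you frame the second direction as a constant-absorbing variant of Lemma~\ref{projcomplemented}, whereas the paper simply writes the three lines out without referring back to that lemma.
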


\begin{proof}
	If $P/\mathcal{I}$ is $\infty$-projective, then the existence of $u$ is just a direct consequence of Definition~\ref{lambdaprojdef}, taking as $T=id_{P/\mathcal{I}}$ and $u=\hat{T}$. On the other hand, suppose we have the above property and we are going to show that $P/\mathcal{I}$ is $(\lambda \|u\|)$-projective.
	Take a quotient map $Q\colon X\longrightarrow X/\mathcal{J}$ and a Banach lattice homomorphism $T\colon P/\mathcal{I}\To X/\mathcal{J}$. Since $P$ is $\lambda$-projective we can find $S\colon P\To X$ with $Q\circ S = T\circ\pi$ and $\|S\|\leq \lambda\|T\circ\pi\| = \lambda \|T\|$. If we take $\hat{T}= S\circ u$, then $Q\circ \hat{T} = Q\circ S \circ u = T\circ \pi\circ u = T$ and $\|\hat{T}\| \leq \|S\|\|u\| \leq \lambda\|u\|\|T\|$ as desired. 
\end{proof}

%

Since every Banach lattice is a lattice quotient of a free Banach lattice generated by a set (\cite[Proposition 6.8]{dPW15}) and every such lattice is $1^+$-projective (\cite[Proposition 10.2]{dPW15}), this class of Banach lattices plays a fundamental role in the study of projectivity. As a consequence, the following characterization of $\infty$-projective Banach lattices of the form $FBL\langle \mathbb{L} \rangle$ is obtained. 

\begin{cor}\label{CorocaractProyectividad}
	
	Let $\mathbb{L}$ be a lattice and let $R\colon FBL(\mathbb{L}) \longrightarrow FBL \langle \mathbb{L} \rangle$ be the restriction map $R(f) = f\vert_{\mathbb{L}^*}$.	The Banach lattice $FBL\langle \mathbb{L} \rangle$ is $\infty$-projective if and only if there exists a Banach lattice homomorphism $u\colon  FBL\langle \mathbb{L} \rangle \longrightarrow FBL(\mathbb{L})$ such that $R \circ u = id_{FBL\langle \mathbb{L} \rangle}$.
\end{cor}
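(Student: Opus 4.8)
The plan is to realize $FBL\langle\mathbb{L}\rangle$ as a lattice quotient of $FBL(\mathbb{L})$ \emph{precisely} through the map $R$, and then to feed this into Proposition~\ref{Propquotientofprojective}, using that the free Banach lattice generated by a set is $1^+$-projective. Concretely, I would take $P=FBL(\mathbb{L})$, $\mathcal{I}=\ker R$, and identify the quotient map $\pi$ with $R$; once the identification $FBL(\mathbb{L})/\ker R\cong FBL\langle\mathbb{L}\rangle$ is justified, the corollary drops out verbatim from that proposition.

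First I would record the elementary features of $R$. Writing $\delta^{\mathrm{set}}\colon\mathbb{L}\to FBL(\mathbb{L})$ for the canonical generating map of the free Banach lattice over the \emph{set} $\mathbb{L}$ (so $\delta^{\mathrm{set}}(a)$ is evaluation on $[-1,1]^{\mathbb{L}}$), the restriction map $R(f)=f|_{\mathbb{L}^*}$ is a Banach lattice homomorphism, since restriction commutes with the pointwise lattice and linear operations; it is contractive because $\mathbb{L}^*\subseteq[-1,1]^{\mathbb{L}}$, so the defining supremum for $\|R(f)\|_{FBL\langle\mathbb{L}\rangle}$ ranges over fewer test points than the one for $\|f\|_{FBL(\mathbb{L})}$. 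Checking on evaluation functions gives $R\circ\delta^{\mathrm{set}}=\delta_{\mathbb{L}}$, so $R$ is the homomorphism extending the norm-bounded map $\delta_{\mathbb{L}}$ furnished by the universal property of $FBL(\mathbb{L})$.

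The heart of the argument is to show that $R$ is a \emph{metric} quotient, i.e.\ that the induced injection $\bar R\colon W:=FBL(\mathbb{L})/\ker R\to FBL\langle\mathbb{L}\rangle$ (with $\bar R\circ Q=R$, where $Q$ is the quotient map onto $W$) is an isometric isomorphism. The clean way to do this is to construct its inverse by the universal property of $FBL\langle\mathbb{L}\rangle$. The composite $\tilde\delta:=Q\circ\delta^{\mathrm{set}}\colon\mathbb{L}\to W$ is a lattice homomorphism of norm $\le 1$: since $\delta_{\mathbb{L}}$ is a lattice homomorphism, the elements $\delta^{\mathrm{set}}(a\vee b)-\delta^{\mathrm{set}}(a)\vee\delta^{\mathrm{set}}(b)$ and $\delta^{\mathrm{set}}(a\wedge b)-\delta^{\mathrm{set}}(a)\wedge\delta^{\mathrm{set}}(b)$ lie in $\ker R$, hence vanish in $W$. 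The universal property of $FBL\langle\mathbb{L}\rangle$ then yields $S\colon FBL\langle\mathbb{L}\rangle\to W$ with $S\circ\delta_{\mathbb{L}}=\tilde\delta$ and $\|S\|\le 1$. Now $\bar R\circ S$ and $\mathrm{id}$ both extend $\delta_{\mathbb{L}}$, so by uniqueness $\bar R\circ S=\mathrm{id}_{FBL\langle\mathbb{L}\rangle}$; likewise $S\circ R$ and $Q$ both extend $\tilde\delta$ over the set $\mathbb{L}$, so by uniqueness $S\circ R=Q$, whence $S\circ\bar R=\mathrm{id}_W$. Thus $\bar R$ and $S$ are mutually inverse contractions, hence isometries, and $FBL\langle\mathbb{L}\rangle$ is identified, isometrically as a Banach lattice, with $FBL(\mathbb{L})/\ker R$, with $R$ as the quotient map.

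With this identification the conclusion is immediate: $FBL(\mathbb{L})$ is $1^+$-projective by \cite[Proposition 10.2]{dPW15}, hence $\lambda$-projective for some $\lambda>1$, and Proposition~\ref{Propquotientofprojective} applied to $P=FBL(\mathbb{L})$, $\mathcal{I}=\ker R$, $\pi=R$ gives that $FBL\langle\mathbb{L}\rangle$ is $\infty$-projective if and only if there is a Banach lattice homomorphism $u\colon FBL\langle\mathbb{L}\rangle\to FBL(\mathbb{L})$ with $R\circ u=\mathrm{id}_{FBL\langle\mathbb{L}\rangle}$. I expect the main obstacle to be exactly the metric-quotient step: one must rule out that $R$ is merely a contractive surjection onto a dense sublattice and confirm that the quotient norm on $FBL(\mathbb{L})/\ker R$ agrees with the $FBL\langle\mathbb{L}\rangle$-norm. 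The symmetric construction of the inverse $S$ via the two universal properties is what settles this cleanly; alternatively it can be cited from the construction of $FBL\langle\mathbb{L}\rangle$ in \cite{ARA18}.
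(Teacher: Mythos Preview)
Your proposal is correct and follows essentially the same route as the paper: the corollary is stated there as an immediate consequence of Proposition~\ref{Propquotientofprojective}, combined with the $1^+$-projectivity of $FBL(\mathbb{L})$ \cite[Proposition~10.2]{dPW15} and the fact that $FBL\langle\mathbb{L}\rangle$ is a lattice quotient of $FBL(\mathbb{L})$ via $R$. The paper does not spell out the metric-quotient identification $FBL(\mathbb{L})/\ker R\cong FBL\langle\mathbb{L}\rangle$, treating it as implicit in the construction of $FBL\langle\mathbb{L}\rangle$ from \cite{ARA18}; your argument via the two universal properties is a clean and correct way to make this step explicit.
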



\bigskip

In order to study sufficient and necessary conditions for the existence of the homomorphism $u$ in Corollary \ref{CorocaractProyectividad}, we need the following auxiliary lemma, which will be used in the proof of Theorem \ref{theoProjectivityboundedness}.

\begin{lem}
\label{lemauxnormboundedlattice}
Let $A$ be a set and $\mathbb{L}$ be a norm-bounded sublattice of $FBL(A)$. Then, for every $\varepsilon>0$ there is a finite set $F \subseteq A$ such that $|f(x^*)|\leq \varepsilon$ for every $f\in \mathbb{L}$ and every $x^*\in [-1,1]^A$ satisfying $x^*|_F = 0$. 
\end{lem}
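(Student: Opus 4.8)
The plan is to argue by contradiction, exploiting the lattice structure of $\mathbb{L}$ together with its norm-boundedness to stack up many disjointly supported contributions, thereby forcing a single element of $\mathbb{L}$ to have arbitrarily large norm. Write $C=\sup_{f\in\mathbb{L}}\norm{f}_{FBL(A)}<\infty$, and suppose for contradiction that for some $\varepsilon>0$ \emph{every} finite set $F\subseteq A$ fails the conclusion; that is, there are $f\in\mathbb{L}$ and $x^*\in[-1,1]^A$ with $x^*|_F=0$ and $\abs{f(x^*)}>\varepsilon$.

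Two elementary facts are used repeatedly. First, taking $n=1$ in the definition of the norm gives the pointwise bound $\abs{f(x^*)}\le\norm{f}_{FBL(A)}$ for every $x^*\in[-1,1]^A$. Second, since $FBL(A)$ is the norm-closure of the sublattice generated by $\set{\delta_a:a\in A}$, every $f\in FBL(A)$ is approximable in norm by a lattice polynomial $g$ in finitely many generators $\delta_a$, $a\in F_g$, with $F_g$ finite; such a $g$ depends only on the coordinates in $F_g$, so $g(y^*)=g(x^*)$ whenever $y^*$ agrees with $x^*$ on $F_g$ and vanishes elsewhere. In particular, if a witness $f$ is approximated with $\norm{f-g}<\varepsilon/4$ and $x^*$ is replaced by such a $y^*$, then since $g(y^*)=g(x^*)$ and $\abs{f(x^*)}>\varepsilon$, the pointwise bound applied twice gives $\abs{f(y^*)}>\varepsilon-2\norm{f-g}>\varepsilon/2$, while $y^*$ now has finite support contained in $F_g$.

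Using these facts I would construct inductively a sequence $(f_n,y_n^*)_{n\ge 0}$ with $f_n\in\mathbb{L}$, with $y_n^*\in[-1,1]^A$ of finite support, with $\abs{f_n(y_n^*)}>\varepsilon/2$, and with the supports $\supp(y_n^*)$ pairwise disjoint. At stage $n$ one applies the failure of the conclusion to the finite set $F=\bigcup_{j<n}\supp(y_j^*)$, obtains a witness $(f_n,x_n^*)$ with $x_n^*|_F=0$, and localizes as above to produce $y_n^*$; since $x_n^*$ vanishes on $F$, so does $y_n^*$, which keeps the new support disjoint from all previous ones. As $\abs{f_n(y_n^*)}>\varepsilon/2$ for every $n$, infinitely many of the values $f_n(y_n^*)$ share a sign, and after passing to such a subsequence (relabelled $f_0,f_1,\dots$) I may assume $f_n(y_n^*)>\varepsilon/2$ for all $n$, the case of negative sign being symmetric with joins replaced by meets below.

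The final step is where the lattice structure is indispensable, and it is the main obstacle: the norm compares values of a \emph{single} function at several points, so to exploit norm-boundedness one must first collapse the varying $f_n$ into one element of $\mathbb{L}$. Set $h_N=f_0\vee\cdots\vee f_N\in\mathbb{L}$. For each $j\le N$ the pointwise join gives $h_N(y_j^*)\ge f_j(y_j^*)>\varepsilon/2$, and since $y_0^*,\dots,y_N^*$ have pairwise disjoint supports and lie in $[-1,1]^A$, the admissibility condition $\sup_{a\in A}\sum_{j=0}^N\abs{y_j^*(a)}\le 1$ holds. Hence $\norm{h_N}_{FBL(A)}\ge\sum_{j=0}^N\abs{h_N(y_j^*)}>(N+1)\varepsilon/2$, which contradicts $\norm{h_N}_{FBL(A)}\le C$ once $N$ is large. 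Thus the heart of the proof is engineering witnesses with disjoint supports (so that the admissibility condition is met) through the finite-support localization, and then invoking closedness of $\mathbb{L}$ under finite joins and meets.
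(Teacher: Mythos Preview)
Your argument is correct and follows essentially the same route as the paper: produce a sequence of witnesses with pairwise disjoint finite supports and then use closure of $\mathbb{L}$ under finite joins (or meets) to push the norm to infinity via the admissibility condition. The only cosmetic differences are that the paper obtains finitely supported witnesses directly from continuity in the product topology rather than via polynomial approximation, and that your passage to a subsequence of constant sign is a bit more careful than the paper's display $\|f_1\vee\cdots\vee f_n\|\ge\sum_j|f_j(x_{F_j}^*)|$, which tacitly needs exactly this adjustment.
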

\begin{proof}
Suppose by contradiction that there is $\varepsilon>0$ such that for every finite set $F \subseteq A$ there is $x_F^*\in [-1,1]^A$ satisfying $x^*|_F = 0$ and $f_F \in  \mathbb{L}$ with $|f_F(x_F^*)|> \varepsilon$.
Recall that functions in $FBL(A)$ are continuous with respect to the product topology in $[-1,1]^A$, so indeed each $x^*_F \in [-1,1]^A$ can be taken to be finitely supported.
We construct a sequence $(f_n)_{n \in \N}$ in $\mathbb{L}$ and a sequence of increasing finite sets $(F_n)_{n\in\N}$ as follows. First, take any finite set $F_0$ and set $f_1:=f_{F_0}$ and $F_1=F_0 \cup \{x\in A: x_{F_0}^*(x) \neq 0 \}$. Then, we can define $f_n$ and $F_n$ recursively just taking $f_n:=f_{F_{n-1}}$ and $F_n=F_{n-1} \cup \{x\in A: x_{F_{n-1}}^*(x) \neq 0 \}$.

We have that $f_1 \vee \ldots \vee f_n \in \mathbb{L} $ for every $n \in \N$. We claim that this sequence is not norm-bounded.
Indeed, notice that 
$$ \sup_{x\in A} \sum_{j=1}^n |x^*_{F_j}(x)| \leq 1 ,$$
since the functions $x^*_{F_1}, \ldots, x^*_{F_n}$ have pairwise disjoint supports.
Thus, by the definition of the norm of $FBL(A)$, we have that 
$$\|f_1 \vee \ldots \vee f_n \|\geq \sum_{j=1}^n |f_j(x^*_{F_j})| \geq n\varepsilon $$ 
for every $n\in \N$, which contradicts the fact that $\mathbb{L}$ is a norm-bounded subset of $FBL(A)$.
\end{proof}

\begin{thm}
\label{theoProjectivityboundedness}
If $FBL\langle \mathbb{L} \rangle$ is $\infty$-projective then $\mathbb{L}$ has maximum and minimum.
\end{thm}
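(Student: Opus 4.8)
The plan is to prove the contrapositive: assume $\mathbb{L}$ has no maximum (the case of no minimum is symmetric) and show that $FBL\langle\mathbb{L}\rangle$ fails to be $\infty$-projective. By Corollary \ref{CorocaractProyectividad}, it suffices to show there is no Banach lattice homomorphism $u\colon FBL\langle\mathbb{L}\rangle \longrightarrow FBL(\mathbb{L})$ with $R\circ u = \mathrm{id}$, where $R(f)=f|_{\mathbb{L}^*}$ is the restriction map. So I would argue by contradiction, supposing such a $u$ exists, and derive a contradiction from the interaction between $u$ being norm-bounded and $\mathbb{L}$ having no top element.

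Let me set notation: $FBL(\mathbb{L})$ lives inside $\mathbb{R}^{[-1,1]^\mathbb{L}}$, with generators $\delta_a$ ($a\in\mathbb{L}$) indexed by the underlying \emph{set} of $\mathbb{L}$, whereas $FBL\langle\mathbb{L}\rangle$ lives inside $\mathbb{R}^{\mathbb{L}^*}$ and its generators $\delta_x$ respect the lattice operations. The key structural fact is that each $u(\delta_x)$ is an element of $FBL(\mathbb{L})$, hence by the continuity and positive-homogeneity built into the free Banach lattice over a set, it is determined by its values on finitely supported points and ``concentrates'' on finite subsets of $\mathbb{L}$. I would consider the image sublattice $u(\delta_\mathbb{L}(\mathbb{L})) = \{u(\delta_x) : x\in\mathbb{L}\}$ inside $FBL(\mathbb{L})$. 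Since $u$ is a bounded lattice homomorphism and the $\delta_x$ form a norm-bounded family (indeed $\|\delta_x\|\le 1$), this image is a norm-bounded sublattice of $FBL(\mathbb{L})$, which is exactly the hypothesis of Lemma \ref{lemauxnormboundedlattice}.

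The engine of the argument is Lemma \ref{lemauxnormboundedlattice}: fixing $\varepsilon>0$, it yields a finite set $F\subseteq\mathbb{L}$ such that $|u(\delta_x)(y^*)|\le\varepsilon$ for every $x\in\mathbb{L}$ and every $y^*\in[-1,1]^\mathbb{L}$ vanishing on $F$. Because $\mathbb{L}$ has no maximum, I can choose some $x_0\in\mathbb{L}$ that is not below any element of the finite set $F$ (if every element of $F$ dominated or were incomparable in the wrong way, one could bound $F$; more carefully, using that $F$ is finite and $\mathbb{L}$ has no top, pick $x_0$ witnessing the failure of $F$ to be cofinal). Then I would exhibit a specific lattice homomorphism $y^*\in[-1,1]^\mathbb{L}\subseteq$ — restricted suitably to $\mathbb{L}^*$ — that vanishes on $F$ yet satisfies $\delta_{x_0}(y^*)=y^*(x_0)=1$. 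Applying the relation $R\circ u=\mathrm{id}$, which forces $u(\delta_{x_0})(y^*)=\delta_{x_0}(y^*)=1$ whenever $y^*\in\mathbb{L}^*$, contradicts the bound $|u(\delta_{x_0})(y^*)|\le\varepsilon<1$ from the lemma.

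The main obstacle, and the step requiring the most care, is the construction of the homomorphism $y^*$: I need a lattice homomorphism $\mathbb{L}\to[-1,1]$ that kills the finite set $F$ but sends $x_0$ to $1$, and I must ensure it genuinely lies in $\mathbb{L}^*$ so that the identity $R\circ u=\mathrm{id}$ applies at the point $y^*$. Here the absence of a maximum is essential: it is what allows $x_0$ to sit ``above'' all of $F$ in the relevant sense, so that a two-valued or suitably adapted lattice homomorphism can separate $x_0$ from $F$. I would build $y^*$ as (the indicator-type homomorphism associated to) an appropriate prime filter or order ideal of the distributive lattice $\mathbb{L}$ — containing $x_0$ and disjoint from the downset generated by $F$ — using distributivity to guarantee existence; the bookkeeping that $y^*$ vanishes on all of $F$ (not merely on $x_0$'s complement) and the matching of the set-support notion in Lemma \ref{lemauxnormboundedlattice} with the lattice-homomorphism structure is where the real work lies.
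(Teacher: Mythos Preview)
Your proposal is correct and follows essentially the same route as the paper: contradiction via Corollary~\ref{CorocaractProyectividad}, apply Lemma~\ref{lemauxnormboundedlattice} to the norm-bounded sublattice $\{u(\delta_x):x\in\mathbb{L}\}$ to obtain a finite $F$, then use the absence of a maximum together with distributivity to produce a separating $\{0,1\}$-valued lattice homomorphism $y^*\in\mathbb{L}^*$ that vanishes on $F$ but hits $1$ at some $x_0$, contradicting $R\circ u=\mathrm{id}$. Two small sharpenings: the correct choice is $x_0\not\leq \bigvee F$ (which exists since $\bigvee F\in\mathbb{L}$ is not a maximum), not merely ``$x_0$ not below any element of $F$''; and your prime-filter construction of $y^*$ is exactly what the paper does, phrased instead as embedding $\mathbb{L}$ into a Boolean algebra and taking a $\{0,1\}$-valued homomorphism separating $\bigvee F$ from $x_0$.
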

\begin{proof}
Suppose, by contradiction, that  $FBL \langle \mathbb{L} \rangle$ is $\infty$-projective and that $\mathbb{L}$ has no maximum (the proof is analogue if $\mathbb{L}$ has no minimum). By Corollary \ref{CorocaractProyectividad}, there is a Banach lattice homomorphism $u \colon FBL \langle \mathbb{L} \rangle \longrightarrow FBL(\mathbb{L})$ such that $R \circ u = id_{FBL \langle \mathbb{L} \rangle}$, where $R \colon FBL(\mathbb{L}) \longrightarrow FBL \langle \mathbb{L} \rangle$ is the restriction map, given by $R(f) = f\vert_{\mathbb{L}^*}$ for every $f \in FBL(\mathbb{L})$. If for every $x \in \mathbb{L}$ we put $\varphi_x:=u(\delta_x)$, where $\delta_x \colon \mathbb{L}^* \longrightarrow \mathbb{R}$ is the evaluation function,
then $\{\varphi_x: x\in\mathbb{L} \}=u(\delta_\mathbb{L}(\mathbb{L}))$ is a norm-bounded lattice in $FBL(\mathbb{L})$ since $\|\varphi_x\|=\|u(\delta_x)\| \leq \|u\|$ for every $x\in\mathbb{L}$. Thus, by Lemma \ref{lemauxnormboundedlattice}, there is a finite set $F \subseteq \mathbb{L}$ such that $|\varphi_x(x^*)|<\frac{1}{2}$ for every $x \in \mathbb{L}$ and every $x^* \in [-1,1]^\mathbb{L}$ satisfying $x^*|_F=0$.
Since $F$ is a finite subset in $\mathbb{L}$, there exists $z:=\sup_{x\in F} x$. By hypothesis, $\mathbb{L}$ has no maximum, so there is $z'\in \mathbb{L}$ such that $z' \not\leq z$. Recall that, since $\mathbb{L}$ is distributive, it is lattice isomorphic to a sublattice of a Boolean algebra and, therefore, we can take a lattice homomorphism $x^* \in \mathbb{L}^*$ taking only values zero and one and with $x^*(z)=0$ and $x^*(z')=1$.
Notice that, since $x^*$ only takes values zero and one, we have that $x^*(x)=0$ for every $x \in F$. Thus, $x^*|_F=0$ and therefore $|\varphi_x(x^*)|<\frac{1}{2}$ for every $x \in \mathbb{L}$. But then

$$1=x^*(z')=\delta_{z'}(x^*)=R(u(\delta_{z'}))(x^*)=u(\delta_{z'})(x^*)=\varphi_{z'}(x^*) < \frac{1}{2},$$
which is a contradiction.
\end{proof}

The class of absolute neighborhood retracts turned out to play a fundamental role in the study of projective Banach lattices.

\begin{defn}\label{DefinitionANR}
	Let $K$ be a compact Hausdorff topological space. A compact Hausdorff topological subspace $K_0$ of $K$ is a \textit{neighbourhood retract of $K$} if it is a retract of some open set of $K$, that is, there exists an open set $U$ of $K$ with $K_0 \subset U$ and a continuous map $P \colon U \longrightarrow K_0$ such that $P \vert_{K_0} = id_{K_0}$. We say that $K$ is an \textit{absolute neighbourhood retract} (ANR, for short) if it is a neighbourhood retract of every compact Hausdorff topological space containing it.
\end{defn}

It was shown by B. de Pagter and A. W. Wickstead that if $C(K)$ is $\infty$-projective, then $K$ is an ANR (see the proof of \cite[Proposition 11.7]{dPW15}). On the other hand, it was shown in \cite[Theorem 1.4]{amr20projectivity} that $C(K)$ is $1^+$-projective whenever $K$ is an ANR. 
Bearing in mind that $\infty$-projectivity is stable under renormings, the following characterization is a consequence of the aforementioned results, Theorem \ref{TheoremLatticeIsomorphicToC(K)} and Theorem \ref{theoProjectivityboundedness}.

\begin{thm}
\label{theoCharacterizationProjectivity}
$FBL \langle \mathbb{L} \rangle $ is $\infty$-projective if and only if $\mathbb{L}$ has a maximum $M$ and a minimum $m$ and $K_\mathbb{L} = \{ x^* \in \mathbb{L}^* : \max\{|x^*(m)|, |x^*(M)|\} = 1 \}\subseteq [-1,1]^\mathbb{L}$, endowed with the product topology, is an ANR.
\end{thm}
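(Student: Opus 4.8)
The plan is to assemble the statement from the three ingredients already in place: the boundedness necessary condition (Theorem \ref{theoProjectivityboundedness}), the $2$-lattice isomorphism with a $C(K)$-space (Theorem \ref{TheoremLatticeIsomorphicToC(K)}), and the external characterization of $\infty$-projective $C(K)$-spaces, namely the ANR criterion from the proof of \cite[Proposition 11.7]{dPW15} together with its converse \cite[Theorem 1.4]{amr20projectivity}. The bridge between these is the fact that $\infty$-projectivity is invariant under lattice isomorphisms, and since this is the only genuinely new point I would establish it first.

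For that invariance, suppose $P$ is $\lambda$-projective and $\Phi\colon P\To P'$ is a lattice isomorphism with $\|\Phi\|\,\|\Phi^{-1}\|\le C$. Given a quotient map $Q\colon X\To X/\mathcal{J}$ and a lattice homomorphism $T'\colon P'\To X/\mathcal{J}$, I would apply the $\lambda$-projectivity of $P$ to $T=T'\circ\Phi\colon P\To X/\mathcal{J}$, obtaining $\hat T\colon P\To X$ with $Q\circ\hat T=T$ and $\|\hat T\|\le\lambda\|T\|\le\lambda\|\Phi\|\,\|T'\|$, and then set $\hat{T}'=\hat T\circ\Phi^{-1}$. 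A routine check gives $Q\circ\hat{T}'=Q\circ\hat T\circ\Phi^{-1}=T'\circ\Phi\circ\Phi^{-1}=T'$ and $\|\hat{T}'\|\le\lambda C\,\|T'\|$, so $P'$ is $(\lambda C)$-projective and hence $\infty$-projective. This is close in spirit to Lemma \ref{projcomplemented}, but that lemma insists on norm-one maps, so the isomorphism case needs this short separate argument.

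With this in hand, the forward implication runs as follows. If $FBL\langle\mathbb{L}\rangle$ is $\infty$-projective, then Theorem \ref{theoProjectivityboundedness} forces $\mathbb{L}$ to have a maximum $M$ and a minimum $m$. Theorem \ref{TheoremLatticeIsomorphicToC(K)} then produces a $2$-lattice isomorphism between $FBL\langle\mathbb{L}\rangle$ and $C(K_\mathbb{L})$, so by the invariance just proved (with $C=2$) the space $C(K_\mathbb{L})$ is $\infty$-projective; the de Pagter--Wickstead result then yields that $K_\mathbb{L}$ is an ANR. For the converse, assuming $\mathbb{L}$ has maximum $M$ and minimum $m$ and that $K_\mathbb{L}$ is an ANR, \cite[Theorem 1.4]{amr20projectivity} gives that $C(K_\mathbb{L})$ is $1^+$-projective, in particular $\infty$-projective, and transporting this across the $2$-lattice isomorphism of Theorem \ref{TheoremLatticeIsomorphicToC(K)} shows $FBL\langle\mathbb{L}\rangle$ is $\infty$-projective.

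I do not expect a substantial obstacle here, since every heavy step is quoted from earlier in the paper or from the literature. The one point to handle carefully is precisely that we transport $\infty$-projectivity, i.e. the existence of \emph{some} constant, rather than a fixed $\lambda$: the $2$-lattice isomorphism degrades the projectivity constant by a factor of $2$, which is harmless for $\infty$-projectivity but would matter if one tried to track an optimal constant. This is also why the equivalence is stated only for $\infty$-projectivity and not, say, for $1^+$-projectivity.
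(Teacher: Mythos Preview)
Your proposal is correct and follows essentially the same route as the paper: the paper's proof is the single sentence ``Bearing in mind that $\infty$-projectivity is stable under renormings, the following characterization is a consequence of the aforementioned results, Theorem \ref{TheoremLatticeIsomorphicToC(K)} and Theorem \ref{theoProjectivityboundedness},'' which is exactly the assembly you describe. Your explicit argument for invariance of $\infty$-projectivity under lattice isomorphisms just spells out what the paper takes for granted.
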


\begin{cor}
\label{CorollaryProjectivity}
$FBL \langle \mathbb{L} \rangle $ is $\infty$-projective if and only if it is lattice isomorphic to a $C(K)$-space with $K$ being an ANR.
\end{cor}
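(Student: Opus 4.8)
The plan is to assemble this corollary from the two theorems already established together with the external results of de~Pagter--Wickstead and of \cite{amr20projectivity}, together with the stability of $\infty$-projectivity under renormings (equivalently, under lattice isomorphisms). The two implications are handled separately, and both are short once the machinery is in place.

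For the forward implication, I would assume $FBL\langle\mathbb{L}\rangle$ is $\infty$-projective and invoke Theorem~\ref{theoCharacterizationProjectivity} to deduce that $\mathbb{L}$ has a maximum $M$ and a minimum $m$ and that $K_\mathbb{L}=\{x^*\in\mathbb{L}^*:\max\{|x^*(m)|,|x^*(M)|\}=1\}$ is an ANR. Since $\mathbb{L}$ is now bounded, Theorem~\ref{TheoremLatticeIsomorphicToC(K)} applies and yields that $FBL\langle\mathbb{L}\rangle$ is ($2$-)lattice isomorphic to $C(K_\mathbb{L})$. Taking $K=K_\mathbb{L}$ exhibits $FBL\langle\mathbb{L}\rangle$ as lattice isomorphic to a $C(K)$-space with $K$ an ANR, which is exactly the asserted conclusion.

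For the converse, I would suppose that $FBL\langle\mathbb{L}\rangle$ is lattice isomorphic to some $C(K)$ with $K$ an ANR. By \cite[Theorem 1.4]{amr20projectivity}, the hypothesis that $K$ is an ANR guarantees that $C(K)$ is $1^+$-projective, and in particular $\infty$-projective. Because $\infty$-projectivity is preserved under lattice isomorphisms (it is stable under renormings, as noted before the statement), the lattice isomorphism between $FBL\langle\mathbb{L}\rangle$ and $C(K)$ transports this property back to $FBL\langle\mathbb{L}\rangle$, so $FBL\langle\mathbb{L}\rangle$ is $\infty$-projective. This closes the equivalence.

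I do not anticipate a genuine obstacle, as the corollary is a packaging of prior results; the only point requiring a moment of care is logical rather than technical. In the forward direction the ANR produced is the canonical space $K_\mathbb{L}$, whereas the converse only assumes the existence of \emph{some} ANR $K$ with $C(K)$ lattice isomorphic to $FBL\langle\mathbb{L}\rangle$; the argument must therefore route through the isomorphism-invariance of $\infty$-projectivity rather than through any identification of $K$ with $K_\mathbb{L}$. Ensuring that each cited ingredient is applied to the correct object (Theorem~\ref{theoCharacterizationProjectivity} for necessity, Theorem~\ref{TheoremLatticeIsomorphicToC(K)} for the explicit representation, and \cite{amr20projectivity} together with renorming-stability for sufficiency) is the substance of the write-up.
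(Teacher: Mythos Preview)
Your proposal is correct and follows essentially the same approach as the paper: the corollary is not given a separate proof there, but is presented as an immediate consequence of Theorem~\ref{theoCharacterizationProjectivity}, Theorem~\ref{TheoremLatticeIsomorphicToC(K)}, the characterization of projective $C(K)$-spaces from \cite{dPW15} and \cite{amr20projectivity}, and the stability of $\infty$-projectivity under renormings. Your routing of the forward implication through $K_\mathbb{L}$ and of the converse through isomorphism-invariance of $\infty$-projectivity (rather than any identification of $K$ with $K_\mathbb{L}$) is exactly what is intended.
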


\section{Examples and applications}
\label{SectionApplications}

In this section we study the projectivity of $FBL\langle \mathbb{L} \rangle$ for some particular classes of lattices such as linearly ordered sets and Boolean algebras. First, we present an important class of lattices $\mathbb{L}$ for which $FBL\langle \mathbb{L} \rangle$ is projective, which is the class of freely generated lattices together with a maximum and a minimum. Given a set $A$, we denote by $bfree(A)$ the lattice freely generated by $A$, together with a maximum $M$ and a minimum $m$. The lattice $bfree(A)$ can be identified with the sublattice of $\{0,1\}^{\{0,1\}^A}$ generated by the functions $\pi_a \in \{0,1\}^{\{0,1\}^A}$ given by the formula $\pi_a(f)=f(a)$ for every $f \in \{0,1\}^A$ and every $a \in A$, together with the constant functions zero and one.

\begin{lem}\label{LemmaProjectivityFreeLatticeWithMinimumAndMaximum}
	For any set $A$, $FBL \langle bfree(A) \rangle$ is $\infty$-projective.
%
\end{lem}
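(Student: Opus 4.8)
The plan is to apply Theorem~\ref{theoCharacterizationProjectivity}, so the whole task reduces to verifying that the compact space $K_{\mathbb{L}}$ associated to $\mathbb{L} = bfree(A)$ is an ANR. Since $bfree(A)$ automatically has a minimum $m$ and a maximum $M$ by construction, the boundedness hypothesis of the theorem is satisfied for free, and I only need to identify $K_{\mathbb{L}}$ and show it is an absolute neighbourhood retract. So first I would compute the lattice homomorphisms $x^* \colon bfree(A) \longrightarrow [-1,1]$ explicitly. A lattice homomorphism into $[-1,1]$ must send $m$ and $M$ to some values, and because $bfree(A)$ sits inside $\{0,1\}^{\{0,1\}^A}$ with the $\pi_a$ and the constants generating it, I expect the relevant homomorphisms to be governed by where they send the generators $\pi_a$, together with the normalisation $\max\{|x^*(m)|, |x^*(M)|\} = 1$ coming from the definition of $K_{\mathbb{L}}$.

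The key identification I would aim for is that $K_{bfree(A)}$ is homeomorphic to a very simple, manifestly nice space, most plausibly a product of intervals (a Tychonoff cube) or a one-point-type modification thereof. Concretely, I would argue that evaluating a homomorphism $x^*$ on the generators $\pi_a$ produces a point of $[-1,1]^A$ (or $[0,1]^A$ after accounting for the $0$--$1$ values the $\pi_a$ take on $\{0,1\}^A$), and that the freeness of the lattice means there are \emph{no} relations forcing constraints among these values: every assignment of values to the generators that is compatible with the lattice operations and the normalisation extends to a genuine lattice homomorphism. This is exactly where the universal property of the free lattice does the work, and it should yield that $K_{bfree(A)}$ is homeomorphic to a Tychonoff cube $[0,1]^A$ (or a closely related product of intervals), with the product topology matching the one inherited from $[-1,1]^{\mathbb{L}}$ via the continuous bijection as in the proof of Theorem~\ref{TheoremLatticeIsomorphicToC(K)}.

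Once $K_{bfree(A)}$ is identified with a Tychonoff cube, the conclusion is immediate: products of intervals $[0,1]^A$ are absolute retracts, hence in particular absolute neighbourhood retracts, so Theorem~\ref{theoCharacterizationProjectivity} gives that $FBL\langle bfree(A)\rangle$ is $\infty$-projective. I would cite the classical fact that every Tychonoff cube is an absolute retract (it is convex and compact in a locally convex space, so Dugundji's extension theorem applies), which settles the ANR requirement with room to spare.

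The main obstacle, and the step I would spend the most care on, is the precise description of $K_{bfree(A)}$ and the homeomorphism onto a cube. I would need to (i) check that the normalisation condition $\max\{|x^*(m)|,|x^*(M)|\}=1$ does not carve out an awkward non-retract subspace --- for a bounded lattice the minimum and maximum should pin down $x^*(m)$ and $x^*(M)$ to the extreme values so that this condition is automatically met by the relevant homomorphisms, leaving the genuine freedom only in the $\pi_a$-coordinates; and (ii) verify surjectivity of the parametrisation, i.e.\ that every prescribed tuple of generator-values really does arise from a lattice homomorphism, which is precisely the content of freeness. Subtleties about whether the values live in $[0,1]$ versus $[-1,1]$, and whether two distinct tuples can give the same homomorphism, are the places where a careless argument could break, so I would nail down the bijection and its bicontinuity between compact Hausdorff spaces before invoking the absolute retract property.
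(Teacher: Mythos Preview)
Your overall strategy coincides with the paper's: invoke Theorem~\ref{theoCharacterizationProjectivity} and show that $K_{bfree(A)}$ is an ANR. The gap is in your description of $K_{bfree(A)}$. The normalisation $\max\{|x^*(m)|,|x^*(M)|\}=1$ does \emph{not} pin down $x^*(m)$ and $x^*(M)$ to the pair $(-1,1)$. A lattice homomorphism $bfree(A)\to[-1,1]$ need not preserve top and bottom; it only has to satisfy $x^*(m)\leq x^*(\pi_a)\leq x^*(M)$ for every $a$. Under the normalisation, the pair $(x^*(m),x^*(M))$ ranges over the L-shaped set
\[
\{(\alpha,\beta)\in[-1,1]^2:\alpha=-1\text{ or }\beta=1\},
\]
and for each such $(\alpha,\beta)$ the fibre of admissible generator-values is the cube $[\alpha,\beta]^A$, whose size varies from a point (at $(\alpha,\beta)=(-1,-1)$ or $(1,1)$) up to $[-1,1]^A$ (at $(-1,1)$). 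So $K_{bfree(A)}$ is not a Tychonoff cube in the way you suggest, and step~(i) of your plan fails as stated.

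The paper handles this by first observing (via freeness) that $K_{bfree(A)}$ is homeomorphic to
\[
K'=\bigl\{x^*\in[-1,1]^{A\cup\{m,M\}}:\ x^*(m)\leq x^*(a)\leq x^*(M)\ \forall a,\ \max\{|x^*(m)|,|x^*(M)|\}=1\bigr\},
\]
and then constructing an explicit retraction of the full cube $[-1,1]^{A\cup\{m,M\}}$ onto $K'$: first retract the $(m,M)$-coordinates onto the L-shape, then clamp each $a$-coordinate into the resulting interval $[x^*(m),x^*(M)]$. This exhibits $K'$ as an absolute retract (hence ANR) directly, without needing it to be a cube. Your argument can be repaired along exactly these lines once you drop the incorrect claim that $(x^*(m),x^*(M))$ is forced to equal $(-1,1)$.
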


\begin{proof}
	By Theorem \ref{theoCharacterizationProjectivity}, it is enough to show that the compact space
	 $K = \{ x^* \in bfree(A)^* : \max\{|x^*(m)|, |x^*(M)|\} = 1 \}$ is an ANR.
It is enough to prove that $K$ is homeomorphic to a neighborhood retract of $[-1,1]^{\Gamma}$, with $\Gamma = A \cup \{m, M\}$, cf. \cite{Rodabaugh}.
	
	First, observe that $K$ is homeomorphic to $$K' = \{x^* \in [-1,1]^\Gamma : x^*(m) \leq x^*(a) \leq x^*(M)\text{ } \forall a \in A, \max\{|x^*(m)|, |x^*(M)|\} = 1 \}.$$
	
	Let us see that $K'$ is a retract of $[-1,1]^{\Gamma}$. For it, let $r \colon [-1,1]^2 \longrightarrow \{(x,y) \in [-1,1]^2 : x = -1 \text{ or }y = 1\}$ be a continuous retraction from the square $[-1,1]^2$ onto the space $\{(x,y) \in [-1,1]^2 : x = -1 \text{ or }y = 1\}$. 
	
	If we write $r(z) = (r_1(z), r_2(z))$ for every $z \in [-1,1]^2$, the map $R \colon [-1,1]^{\Gamma} \longrightarrow K'$ given by $R(f)(m) = r_1(f(m), f(M))$, $R(f)(M) = r_2(f(m), f(M))$, and $R(f)(a) = (f(a) \wedge R(f)(M)) \vee R(f)(m)$ for every $a \in A$, and for every $f \in [-1,1]^{\Gamma}$, is the desired retraction. 
\end{proof}

\bigskip

A simple way to obtain more examples of projective Banach lattices of the form $FBL\langle \mathbb{L} \rangle $ is by taking complemented sublattices. We say that a sublattice $\mathbb{L} \subset \mathbb{M}$ is \textit{complemented} in $\mathbb{M}$ if there exists a lattice homomorphism $R \colon \mathbb{M} \longrightarrow \mathbb{L}$ whose restriction to $\mathbb{L}$ is the identity.

\begin{prop}\label{PropLatticeComplemented}
	Let $\mathbb{L} \subset \mathbb{M}$ be two lattices. If $\mathbb{L}$ is complemented in $\mathbb{M}$, then $FBL\langle \mathbb{L} \rangle$ is a 1-complemented Banach sublattice of $FBL\langle \mathbb{M} \rangle$.
\end{prop}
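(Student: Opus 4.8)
The plan is to exploit the functoriality of the construction $\mathbb{L} \mapsto FBL\langle \mathbb{L}\rangle$ that is built into its universal property, exactly as in the proof of Lemma \ref{projcomplemented}. Write $i\colon \mathbb{L} \To \mathbb{M}$ for the inclusion and $R\colon \mathbb{M}\To \mathbb{L}$ for the given retraction, so that $i$ and $R$ are lattice homomorphisms with $R\circ i = id_{\mathbb{L}}$. First I would produce the two canonical Banach lattice homomorphisms at the level of free Banach lattices. The composite $\delta_\mathbb{M}\circ i\colon \mathbb{L}\To FBL\langle \mathbb{M}\rangle$ is a lattice homomorphism, and it is norm-bounded with norm $\le 1$ because $\|\delta_\mathbb{M}(i(x))\|=\|\delta_{i(x)}\|_{FBL\langle\mathbb{M}\rangle}\le 1$ for every $x\in\mathbb{L}$ (directly from the definition of $\|\cdot\|_{FBL\langle\mathbb{M}\rangle}$, since $\sum_j |x_j^*(i(x))|\le \sup_{z}\sum_j|x_j^*(z)|\le 1$). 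The universal property of $FBL\langle \mathbb{L}\rangle$ then yields a unique Banach lattice homomorphism $\hat{i}\colon FBL\langle \mathbb{L}\rangle\To FBL\langle\mathbb{M}\rangle$ with $\hat{i}\circ\delta_\mathbb{L}=\delta_\mathbb{M}\circ i$ and $\|\hat{i}\|\le 1$. Symmetrically, $\delta_\mathbb{L}\circ R\colon \mathbb{M}\To FBL\langle\mathbb{L}\rangle$ is a norm-bounded lattice homomorphism of norm $\le 1$, and the universal property of $FBL\langle\mathbb{M}\rangle$ gives a unique Banach lattice homomorphism $\hat{R}\colon FBL\langle\mathbb{M}\rangle\To FBL\langle\mathbb{L}\rangle$ with $\hat{R}\circ\delta_\mathbb{M}=\delta_\mathbb{L}\circ R$ and $\|\hat{R}\|\le 1$.

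Next I would verify that $\hat{R}\circ\hat{i}=id_{FBL\langle\mathbb{L}\rangle}$. It suffices to check this after precomposing with $\delta_\mathbb{L}$ and invoke the uniqueness clause of the universal property. Indeed,
\begin{equation*}
\hat{R}\circ\hat{i}\circ\delta_\mathbb{L}=\hat{R}\circ\delta_\mathbb{M}\circ i=\delta_\mathbb{L}\circ R\circ i=\delta_\mathbb{L}\circ id_{\mathbb{L}}=\delta_\mathbb{L},
\end{equation*}
so both $\hat{R}\circ\hat{i}$ and $id_{FBL\langle\mathbb{L}\rangle}$ are Banach lattice homomorphisms $FBL\langle\mathbb{L}\rangle\To FBL\langle\mathbb{L}\rangle$ whose composition with $\delta_\mathbb{L}$ returns $\delta_\mathbb{L}$; by uniqueness they coincide. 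From this identity together with the two norm bounds I would deduce that $\hat{i}$ is an isometry: for $f\in FBL\langle\mathbb{L}\rangle$,
\begin{equation*}
\|f\|=\|\hat{R}(\hat{i}(f))\|\le \|\hat{R}\|\,\|\hat{i}(f)\|\le \|\hat{i}(f)\|\le \|\hat{i}\|\,\|f\|\le \|f\|,
\end{equation*}
forcing equality throughout and hence $\|\hat{i}(f)\|=\|f\|$.

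Finally I would identify $FBL\langle\mathbb{L}\rangle$ with its isometric image $\hat{i}(FBL\langle\mathbb{L}\rangle)$, which is a Banach sublattice of $FBL\langle\mathbb{M}\rangle$ since $\hat{i}$ is an isometric Banach lattice homomorphism. The operator $P:=\hat{i}\circ\hat{R}\colon FBL\langle\mathbb{M}\rangle\To FBL\langle\mathbb{M}\rangle$ is a Banach lattice homomorphism with $\|P\|\le\|\hat{i}\|\,\|\hat{R}\|\le 1$, and it is idempotent because $P^2=\hat{i}\circ(\hat{R}\circ\hat{i})\circ\hat{R}=\hat{i}\circ\hat{R}=P$, with range exactly $\hat{i}(FBL\langle\mathbb{L}\rangle)$. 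Thus $P$ is a norm-one (lattice) projection onto the sublattice, establishing that $FBL\langle\mathbb{L}\rangle$ is $1$-complemented in $FBL\langle\mathbb{M}\rangle$. I expect no genuine obstacle here: the entire argument is a diagram chase through the universal property, and the only points requiring a word of care are confirming that $\delta_\mathbb{M}\circ i$ and $\delta_\mathbb{L}\circ R$ are norm-bounded lattice homomorphisms (so the universal property applies) and that the uniqueness clause legitimately forces $\hat{R}\circ\hat{i}=id$.
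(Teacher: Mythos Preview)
Your proof is correct and follows exactly the same approach as the paper's: both apply the universal property of $FBL\langle\cdot\rangle$ to the inclusion $i$ and the retraction $R$ to obtain $\hat{i}$ and $\hat{R}$ of norm at most $1$ with $\hat{R}\circ\hat{i}=id_{FBL\langle\mathbb{L}\rangle}$. You simply supply more detail than the paper (the uniqueness argument for $\hat{R}\circ\hat{i}=id$, the isometry of $\hat{i}$, and the explicit projection $P=\hat{i}\circ\hat{R}$), all of which the paper leaves implicit.
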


\begin{proof}
	Let $i \colon \mathbb{L} \longrightarrow \mathbb{M}$ be the inclusion lattice homomorphism and let $R \colon \mathbb{M} \longrightarrow \mathbb{L}$ be the retraction. Using the universal property of the free Banach lattice generated by a lattice, we can find Banach lattice homomorphisms $\hat{i} \colon FBL \langle \mathbb{L} \rangle \longrightarrow FBL \langle \mathbb{M} \rangle$ and $\hat{R} \colon FBL \langle \mathbb{M} \rangle \longrightarrow FBL \langle \mathbb{L} \rangle$ such that $\| \hat{i} \| = \| \hat{R} \| = 1$ and $\hat{R} \circ \hat{i} = id_{FBL\langle \mathbb{L} \rangle}$, so we are done. 
\end{proof}

As a corollary of the last proposition and Lemma \ref{projcomplemented}, we get the following.

\begin{cor}\label{CorProjFreeLatticeComplemented}
	Let $\mathbb{L} \subset \mathbb{M}$ be two lattices. If $\mathbb{L}$ is complemented in $\mathbb{M}$ and $FBL\langle \mathbb{M} \rangle$ is $\lambda$-projective for some $\lambda > 1$, then $FBL\langle \mathbb{L} \rangle$ is $\lambda$-projective. 
\end{cor}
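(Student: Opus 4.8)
The plan is to combine the two immediately preceding results, namely Proposition \ref{PropLatticeComplemented} and Lemma \ref{projcomplemented}, which together carry essentially all the work; the corollary then reduces to matching notation and tracking the constant.

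First I would invoke Proposition \ref{PropLatticeComplemented}. Since $\mathbb{L}$ is complemented in $\mathbb{M}$, that proposition furnishes Banach lattice homomorphisms $\hat{i}\colon FBL\langle \mathbb{L}\rangle \To FBL\langle \mathbb{M}\rangle$ and $\hat{R}\colon FBL\langle \mathbb{M}\rangle \To FBL\langle \mathbb{L}\rangle$ satisfying $\|\hat{i}\| = \|\hat{R}\| = 1$ and $\hat{R}\circ\hat{i} = id_{FBL\langle \mathbb{L}\rangle}$. In other words, $FBL\langle \mathbb{L}\rangle$ sits inside $FBL\langle \mathbb{M}\rangle$ as a $1$-complemented Banach sublattice, realized by this retraction pair.

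Next I would feed this data into Lemma \ref{projcomplemented} under the identifications $P = FBL\langle \mathbb{M}\rangle$ and $P' = FBL\langle \mathbb{L}\rangle$. The three hypotheses of that lemma ($\|\hat{i}\| = \|\hat{R}\| = 1$ and $\hat{R}\circ\hat{i} = id_{P'}$) are exactly the properties just produced, and the standing assumption that $FBL\langle \mathbb{M}\rangle$ is $\lambda$-projective for some $\lambda > 1$ supplies the remaining one. The conclusion of the lemma is then precisely that $P' = FBL\langle \mathbb{L}\rangle$ is $\lambda$-projective, as desired.

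Since the statement is a direct consequence of the two cited results, there is no genuine obstacle at this stage: all the substance lives in the construction of the complemented embedding (via the universal property of $FBL\langle\cdot\rangle$) and in the transfer-of-projectivity argument, both of which have already been established. The only point requiring minimal care is that the constant $\lambda$ is preserved \emph{exactly}, with no multiplicative deterioration; this holds because both $\hat{i}$ and $\hat{R}$ have norm $1$, so composing through them in the proof of Lemma \ref{projcomplemented} introduces no extra factor.
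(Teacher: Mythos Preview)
Your proposal is correct and follows exactly the approach indicated in the paper, which simply states that the corollary is a direct consequence of Proposition~\ref{PropLatticeComplemented} and Lemma~\ref{projcomplemented}. Your write-up merely spells out this two-step combination and correctly observes that the constant $\lambda$ is preserved because $\|\hat{i}\|=\|\hat{R}\|=1$.
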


\bigskip


\begin{thm}\label{TheoremProjectivityOfTheFBLGeneratedByALinearOrder}
	Let $\mathbb{L}$ be a linearly ordered set. Then, $FBL\langle \mathbb{L} \rangle$ is $\infty$-projective if and only if $\mathbb{L}$ is countable and has a minimum and a maximum.
\end{thm}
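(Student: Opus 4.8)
The plan is to combine the characterization in Theorem~\ref{theoCharacterizationProjectivity} with a concrete analysis of the compact space $K_\mathbb{L}$ in the special case when $\mathbb{L}$ is a chain. Necessity of a maximum and a minimum is already contained in Theorem~\ref{theoProjectivityboundedness}, so throughout we may assume $\mathbb{L}$ is bounded, with minimum $m$ and maximum $M$, and the whole problem reduces to deciding when $K_\mathbb{L}$ is an ANR. Since $\mathbb{L}$ is linearly ordered and $[-1,1]$ is a chain, a map $x^*\colon\mathbb{L}\to[-1,1]$ is a lattice homomorphism exactly when it is order preserving; hence $\mathbb{L}^*$ is the set of nondecreasing functions $\mathbb{L}\to[-1,1]$. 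Because $x^*(m)$ and $x^*(M)$ are the least and greatest values of such a function, one sees that $K_\mathbb{L}=A\cup B$, where $A=\{x^*\in\mathbb{L}^*: x^*(m)=-1\}$ and $B=\{x^*\in\mathbb{L}^*: x^*(M)=1\}$. Both $A$ and $B$, as well as $A\cap B$, are compact convex subsets of $[-1,1]^\mathbb{L}$.

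For the implication \emph{countable and bounded $\Rightarrow$ projective}, I would argue as follows. If $\mathbb{L}$ is countable then $[-1,1]^\mathbb{L}$ is metrizable, so $K_\mathbb{L}$ is compact metrizable and $A$, $B$, $A\cap B$ are compact convex metrizable sets. By Dugundji's theorem each of them is an absolute retract, in particular an ANR. The classical sum (gluing) theorem for ANRs in the metrizable setting---if $X=X_1\cup X_2$ with $X_1,X_2$ closed and $X_1,X_2,X_1\cap X_2$ all ANRs, then $X$ is an ANR---then yields that $K_\mathbb{L}$ is an ANR, whence $FBL\langle\mathbb{L}\rangle$ is $\infty$-projective by Theorem~\ref{theoCharacterizationProjectivity}. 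This direction is essentially routine once metrizability is available.

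For the converse, suppose $\mathbb{L}$ is uncountable; I must show $K_\mathbb{L}$ is not an ANR. The starting reduction is that $A$ is a retract of $K_\mathbb{L}$: the map that fixes the value at $m$ to be $-1$ and leaves all other coordinates unchanged is a continuous retraction $K_\mathbb{L}\to A$ (monotonicity is preserved because $-1$ is the least admissible value). Since a retract of an ANR is an ANR, it suffices to prove that $A$ is not an ANR. Identifying $A$ with the space $Q$ of all nondecreasing functions from the uncountable chain $\mathbb{L}\setminus\{m\}$ into $[-1,1]$ (a compact convex \emph{generalized Helly space}), the task becomes: $Q$ is not an ANR whenever its underlying chain is uncountable.

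This last point is the main obstacle and is where the real work lies. It is a genuinely non-metrizable phenomenon: $Q$ is convex, hence contractible and locally contractible, so the obstruction is not any failure of local connectivity but rather the failure of the absolute neighbourhood extension property against non-metrizable test spaces---this is precisely why the classical Helly space of monotone functions on $[0,1]$ fails to be an ANR. One clean route is to extract, using the fact that every uncountable linear order contains an uncountable subset order-isomorphic to a set of reals or a copy of $\omega_1$ or of $\omega_1^*$, a canonical uncountable subchain together with a continuous retraction onto the corresponding model space, thereby reducing to showing that this model space is not an ANR. Establishing that---equivalently, that the generalized Helly space of monotone functions on an uncountable chain is not a neighbourhood retract of the ambient cube $[-1,1]^{\mathbb{L}\setminus\{m\}}$---is the crux; the intuition is that the monotonicity relation couples uncountably many independent \emph{jump} directions, which no retraction, being locally governed by only countably many coordinates, can continuously reconcile. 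Once $Q$, and hence $A$ and $K_\mathbb{L}$, is shown not to be an ANR, Theorem~\ref{theoCharacterizationProjectivity} gives that $FBL\langle\mathbb{L}\rangle$ is not $\infty$-projective, completing the characterization.
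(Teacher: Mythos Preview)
Your forward direction is correct and genuinely different from the paper's. The paper never touches the topology of $K_\mathbb{L}$ here: it shows instead that a countable bounded chain is a complemented sublattice of $bfree(\{e_n:n\in\mathbb{N}\})$ via an explicit inductive embedding, and then invokes Lemma~\ref{LemmaProjectivityFreeLatticeWithMinimumAndMaximum} together with Corollary~\ref{CorProjFreeLatticeComplemented}. Your route---metrizability of $[-1,1]^\mathbb{L}$, Dugundji for the convex pieces $A$, $B$, $A\cap B$, and the Borsuk gluing theorem---is a clean topological alternative that bypasses the algebraic construction entirely.

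The converse, however, has a real gap. You correctly reduce to showing that the generalized Helly space $Q$ of nondecreasing maps from an uncountable chain into $[-1,1]$ is not an ANR, but you do not prove this; you describe it as ``the crux'' and offer only heuristics. This is not a result one can simply cite. Worse, your proposed reduction strategy---pass to an uncountable subchain order-isomorphic to a set of reals, to $\omega_1$, or to $\omega_1^*$---is not available in ZFC: Aronszajn lines are precisely uncountable linear orders containing no such subchain, and they exist outright. So even the outlined plan of attack fails before reaching the model cases.

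The paper avoids ANR theory for this direction altogether. Assuming $\mathbb{L}$ uncountable and $FBL\langle\mathbb{L}\rangle$ $\infty$-projective, it takes the lift $u\colon FBL\langle\mathbb{L}\rangle\to FBL(\mathbb{L})$ from Corollary~\ref{CorocaractProyectividad}, sets $\varphi_x=u(\delta_x)$, and uses pointwise continuity of each $\varphi_x$ on $[-1,1]^\mathbb{L}$ to attach to every $x$ a finite control set $F_x$. A $\Delta$-system argument then produces $x<y$ whose control sets agree only on a root $S$ disjoint from $[x,y]$; one then builds a single $\psi\in[-1,1]^\mathbb{L}$ that is simultaneously close (on $F_x$) to a lattice homomorphism sending $x$ to $3/4$ and close (on $F_y$) to one sending $y$ to $1/4$, forcing $\varphi_x(\psi)>1/2>\varphi_y(\psi)$ and contradicting $\varphi_x\leq\varphi_y$. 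This combinatorial argument is self-contained and does the job your Helly-space claim was meant to do.
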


\begin{proof}
	Suppose that $\mathbb{L}$ is countable and has a minimum $m$ and a maximum $M$. We are going to prove that $\mathbb{L}$ is a complemented sublattice of $\mathbb{M}:=bfree(\{e_n:n\in \N\})$. Once this is done, by Corollary \ref{CorProjFreeLatticeComplemented}, the proof of one implication be finished.
	
	Let us write $\mathbb{L} = \{a_n : n \in \mathbb{N}\} \cup \{m,M\}$. Let us define, by induction, an increasing mapping $i \colon \mathbb{L} \longrightarrow \mathbb{M}$. Set $i(m) = m$ and $i(M) = M$. Define $i(a_1) = e_1$, which is clearly increasing in $\{m,a_1,M\}$. Now assume that $i$ has been defined in $\{m,a_1,\ldots, a_{n-1},M\}$ and define $$i(a_n) = \left(\bigwedge \{i(a_j) : j < n, a_j > a_n\}\right) \wedge \left(\bigvee \{i(a_j) : j < n, a_j < a_n\} \vee e_n\right).$$ To finish the inductive step we have to prove that $i$ is increasing in $\{m,a_1,\ldots a_n,M\}$. It is obvious that it is enough to compare $i(a_n)$ with $i(a_k)$ for $k<n$. Now we have two possibilities:
	\begin{itemize}
		\item If $a_n<a_k$ then $i(a_n) < \bigwedge \{i(a_j) : j < n, a_j > a_n\}\leq i(a_k)$.
		\item If $a_k<a_n$ then $i(a_k)\leq\bigvee \{i(a_j) : j < n, a_j < a_n\}\leq \bigvee \{i(a_j) : j < n, a_j < a_n\}\vee e_n$. On the other hand, $i(a_k)\leq \bigwedge \{i(a_j) : j < n, a_j > a_n\}$ because, by induction hypothesis, $i$ is increasing on $\{m,a_1,\ldots, a_{n-1},M\}$. Consequently $i(a_k)\leq (\bigwedge \{i(a_j) : j < n, a_j > a_n\}) \wedge (\bigvee \{i(a_j) : j < n, a_j < a_n\} \vee e_n)=i(a_n)$, as desired.
	\end{itemize}
	
	This proves that $i$ is increasing. On the other hand, since the generators $\{e_n, n\in \N\}$ are free, $i$ is injective and, therefore, it is a lattice homomorphism from $\mathbb{L}$ into $\mathbb{M}$ since $\mathbb L$ is linearly ordered.
	
	Moreover, there is a unique lattice homomorphism $R \colon \mathbb{M} \longrightarrow \mathbb{L}$ satisfying that $R(e_n) = a_n$ for every $n \in \mathbb{N}$, $R(m) = m$, and $R(M) = M$. A standard induction argument shows that $R \circ i = id_{\mathbb{L}}$, so we are done.

	For the converse, by Theorem \ref{theoCharacterizationProjectivity}, it is enough to show that if $\mathbb{L}$ is uncountable then $FBL\langle \mathbb{L}\rangle$ is not $\infty$-projective. If $FBL \langle \mathbb{L} \rangle$ were $\infty$-projective, by Corollary \ref{CorocaractProyectividad}, there would exist a Banach lattice homomorphism $u \colon FBL \langle \mathbb{L} \rangle \longrightarrow FBL(\mathbb{L})$ such that $R \circ u = id_{FBL \langle \mathbb{L} \rangle}$, where $R \colon FBL(\mathbb{L}) \longrightarrow FBL \langle \mathbb{L} \rangle$ is the restriction map, given by $R(f) = f\vert_{\mathbb{L}^*}$ for every $f \in FBL(\mathbb{L})$. If for every $x \in \mathbb{L}$ we put $\varphi_x:=u(\delta_x)$, where $\delta_x \colon \mathbb{L}^* \longrightarrow \mathbb{R}$ is the evaluation function, we would have that $\varphi_x\vert_{\mathbb{L}^*} = \delta_x$, and $\varphi_x \leq \varphi_y$ whenever $x \leq y$.
	
	For every $x \in \mathbb{L}$, since $\varphi_x \colon [-1,1]^{\mathbb{L}} \longrightarrow \mathbb{R}$ is continuous (in the product topology), there exist a finite set $F_x \subset \mathbb{L}$ and a positive real number $\delta_x > 0$ such that whenever $p, q \in [-1,1]^{\mathbb{L}}$ satisfy that $|p(t)-q(t)| < \delta_x$ for every $t \in F_x$, then $|\varphi_x(p) - \varphi_x(q)| < \frac{1}{4}$.
	
	Since $\mathbb{L}$ is not countable, by \cite[Theorem 11.3.13]{jech} there exists an uncountable set $X \subset \mathbb{L}$ such that $\{F_x : x \in X \}$ is a $\Delta$-system, i.e.~there exists a finite set $S \subset \mathbb{L}$ such that $F_x \cap F_y = S$ for every $x, y \in X$ with $x \neq y$.
	Set $S:=\{b_1<b_2<\ldots<b_N\}$.
	Since $X$ is infinite, at least one of the 
	open intervals $(m,b_1),~(b_1,b_2), \ldots,~(b_{N-1},b_N),~(b_N, M)$ contains infinitely many elements of $X$. In particular,
	there exist $x, y \in X$, with $x < y$, such that the closed interval $[x,y]$ satisfies $[x,y] \cap S = \emptyset$. Now, let $\psi' \colon F_x \cup F_y \longrightarrow [-1,1]$ be the map defined in the following way: For $z \in F_x$, $\psi'(z) = 0$ if $z < x$, $\psi'(z) = \frac{3}{4}$ if $z = x$, and $\psi'(z) = 1$ if $z > x$. For $z \in F_y$, $\psi'(z) = 0$ if $z < y$, $\psi'(z) = \frac{1}{4}$ if $z = y$, and $\psi'(z) = 1$ if $z >y$. Note that $\psi'$ is well-defined since $F_x \cap F_y =S$ and $[x,y] \cap S = \emptyset$. Let $\psi \colon \mathbb{L} \longrightarrow [-1,1]$ be any extension of $\psi'$ to $\mathbb{L}$. It is clear that $\psi\vert_{F_x}$ and $\psi\vert_{F_y}$ are order-preserving, so there exist $\psi_1, \psi_2 \in \mathbb{L}^*$ such that $\psi\vert_{F_x} = \psi_1$ and $\psi\vert_{F_y} = \psi_2$.
	
	Now, by the very definition of $F_x$, $|\varphi_x(\psi) - \varphi_x(\psi_1)| < \frac{1}{4}$. Since $\varphi_x(\psi_1) = \psi_1(x) = \frac{3}{4}$, we have that $\varphi_x(\psi) > \frac{2}{4}= \frac{1}{2}$. Analogously, by the very definition of $F_y$, $|\varphi_y(\psi) - \varphi_y(\psi_2)| < \frac{1}{4}$. Since $\varphi_y(\psi_2) = \psi_2(y) = \frac{1}{4}$, we have that $\varphi_y(\psi) < \frac{2}{4}=\frac{1}{2}$, which contradicts that $\varphi_x \leq \varphi_y$.
\end{proof}
%
%

\bigskip

We finish this section by dealing with another well-known class of lattices, the class of Boolean algebras. Recall that in every Boolean algebra $\mathbb{B}$ there is always a minimum $m$ and a maximum $M$ and, in addition, for every $x \in \mathbb{B}$ there is an element $x^c \in \mathbb{B}$ such that $x \vee x^c = M$ and $x \wedge x^c= m$. This implies that if $x^* \in \mathbb{B}^*$, then $x^*(x) \vee x^*(x^c) = x^*(M)$ and $x^*(x) \wedge x^*(x^c)= x^*(m)$, which in turn implies that $x^*(x) \in \{x^*(m),x^*(M)\}$ for every $x \in \mathbb{B}$. Thus, every real lattice homomorphism on a Boolean algebra takes at most two values.
This property, together with the following lemma, are the key ingredients of the proof of Theorem \ref{TheoremProjectivityOfTheFBLGeneratedByABooleanAlgebra}.

\begin{lem}{\cite[Corollary 16]{Rodabaugh}}
	\label{LemmaANR}
	Let $K_0 \subset K$ be two compact Hausdorff topological spaces. If $K$ is an ANR and $K_0$ is a neighbourhood retract of $K$, then $K_0$ is an ANR.
\end{lem}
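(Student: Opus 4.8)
The plan is to prove the lemma straight from Definition~\ref{DefinitionANR}. I would fix an arbitrary compact Hausdorff space $L$ containing $K_0$ (necessarily as a closed subspace, since $K_0$ is compact and $L$ is Hausdorff) and show that $K_0$ is a neighbourhood retract of $L$. The idea is to build a single compact Hausdorff space in which both $K$ and $L$ sit and meet exactly along $K_0$, so that the ANR property of $K$ can be transported down to $K_0$.

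First I would record an elementary transitivity principle for neighbourhood retracts: if $K_0 \subseteq K \subseteq M$ with $K_0$ a neighbourhood retract of $K$ via $r\colon W \To K_0$ ($W$ open in $K$) and $K$ a neighbourhood retract of $M$ via $\rho\colon U \To K$ ($U$ open in $M$), then $\rho^{-1}(W)$ is open in $M$, contains $K_0$, and $r\circ\rho$ restricted to it is a retraction onto $K_0$; hence $K_0$ is a neighbourhood retract of $M$. I would also note the trivial restriction principle: if $K_0$ is a neighbourhood retract of $M$ and $K_0 \subseteq L \subseteq M$, then intersecting the neighbourhood with $L$ exhibits $K_0$ as a neighbourhood retract of $L$.

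The core construction is the adjunction space $M = K \cup_{K_0} L$, obtained from the disjoint union $K \sqcup L$ by identifying each point of the copy of $K_0$ inside $K$ with the same point of the copy of $K_0$ inside $L$. Since the gluing is performed along the closed subspace $K_0$ through the identity, the associated equivalence relation on the compact Hausdorff space $K \sqcup L$ is closed; therefore $M$ is compact Hausdorff, and both $K$ and $L$ embed as closed subspaces meeting precisely in $K_0$. Now $K$ is a closed subspace of the compact Hausdorff space $M$ and $K$ is an ANR, so $K$ is a neighbourhood retract of $M$. Combining this with the hypothesis that $K_0$ is a neighbourhood retract of $K$, the transitivity principle shows $K_0$ is a neighbourhood retract of $M$, and the restriction principle then yields that $K_0$ is a neighbourhood retract of $L$. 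As $L$ was an arbitrary compact Hausdorff superspace of $K_0$, this proves that $K_0$ is an ANR.

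The step I expect to be the main obstacle is the point-set verification that $M$ is compact Hausdorff with $K$ and $L$ sitting inside as closed subspaces intersecting exactly in $K_0$. Compactness is immediate, as $M$ is a quotient of the compact space $K \sqcup L$; Hausdorffness relies on the standard fact that a quotient of a compact Hausdorff space by a closed equivalence relation is again compact Hausdorff, the relation here being closed precisely because we glue along the compact set $K_0$ via the identity map. Once this topological groundwork is secured, the remainder of the argument is the purely formal composition and restriction of retractions described above.
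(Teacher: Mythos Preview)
The paper does not prove this lemma; it is quoted as \cite[Corollary 16]{Rodabaugh} and used as a black box. Your argument is a correct, self-contained proof via the adjunction space $M=K\cup_{K_0}L$, which is the standard route to this result. The two auxiliary principles you isolate (transitivity and restriction of neighbourhood retracts) are elementary and correctly stated, and the point-set claim that $M$ is compact Hausdorff with $K$ and $L$ embedded as closed subspaces meeting exactly in $K_0$ follows, as you say, from the fact that gluing along the compact set $K_0$ via the identity gives a closed equivalence relation on the compact Hausdorff space $K\sqcup L$; the injective continuous maps $K\to M$ and $L\to M$ are then automatically embeddings because their domains are compact and $M$ is Hausdorff. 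So there is nothing to compare: your proof simply supplies what the paper outsources to the literature.
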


\begin{thm}\label{TheoremProjectivityOfTheFBLGeneratedByABooleanAlgebra}
	Let $\mathbb{B}$ be an infinite Boolean algebra. Then, $FBL\langle \mathbb{B} \rangle$ is not $\infty$-projective.
\end{thm}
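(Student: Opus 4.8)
The plan is to apply the characterisation in Theorem \ref{theoCharacterizationProjectivity}. Since every Boolean algebra has a minimum $m$ and a maximum $M$, it suffices to show that
\[
K_\mathbb{B}=\{x^*\in\mathbb{B}^*:\max\{|x^*(m)|,|x^*(M)|\}=1\}\subseteq[-1,1]^\mathbb{B}
\]
is \emph{not} an ANR, and for this I would isolate inside it a copy of the Stone space of $\mathbb{B}$. Using the two-valuedness recalled above, namely that $x^*(x)\in\{x^*(m),x^*(M)\}$ for every $x^*\in\mathbb{B}^*$ and every $x\in\mathbb{B}$, set $K_0=\{x^*\in K_\mathbb{B}:x^*(m)=-1,\ x^*(M)=1\}$. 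The affine rescaling $x^*\mapsto\tfrac12(x^*+1)$ identifies $K_0$ with the set of lattice homomorphisms $\mathbb{B}\to\{0,1\}$ carrying $m$ to $0$ and $M$ to $1$; these are precisely the ultrafilters of $\mathbb{B}$, so $K_0$ is homeomorphic (in the product topology) to the Stone space of $\mathbb{B}$. As $\mathbb{B}$ is infinite, $K_0$ is an infinite, totally disconnected compact Hausdorff space, and it is closed in $K_\mathbb{B}$.

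The key step is to prove that $K_0$ is a \emph{neighbourhood retract} of $K_\mathbb{B}$, so that Lemma \ref{LemmaANR} becomes available. Let $U=\{x^*\in K_\mathbb{B}:x^*(m)<0<x^*(M)\}$, which is open in $K_\mathbb{B}$ (the two evaluation coordinates are continuous) and contains $K_0$. For $x^*\in U$ every value $x^*(x)\in\{x^*(m),x^*(M)\}$ is nonzero, so I can define $R\colon U\to K_0$ by $R(x^*)(x)=\sgn(x^*(x))$. Because $\sgn$ sends the negative value $x^*(m)$ to $-1$ and the positive value $x^*(M)$ to $1$ in an order-preserving way, $R(x^*)$ is again a lattice homomorphism with $R(x^*)(m)=-1$ and $R(x^*)(M)=1$, hence $R(x^*)\in K_0$; moreover $R$ is the identity on $K_0$. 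Continuity of $R$ reduces to continuity of each coordinate $x^*\mapsto\sgn(x^*(x))$, which holds on $U$ since there $x^*(x)\neq0$ and $\sgn$ is continuous on $\R\setminus\{0\}$. Thus $R$ is a continuous retraction and $K_0$ is a neighbourhood retract of $K_\mathbb{B}$.

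To finish, suppose towards a contradiction that $K_\mathbb{B}$ is an ANR. Then Lemma \ref{LemmaANR} forces $K_0$ to be an ANR as well. However, every ANR is locally connected, whereas an infinite totally disconnected compact space is not: in a totally disconnected space the only connected sets are singletons, so local connectedness would make every point isolated and the space discrete, contradicting that $K_0$ is infinite and compact. This contradiction shows that $K_\mathbb{B}$ is not an ANR, and hence by Theorem \ref{theoCharacterizationProjectivity} that $FBL\langle\mathbb{B}\rangle$ is not $\infty$-projective.

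I expect the main obstacle to lie in the implication ``ANR $\Rightarrow$ locally connected'' in the possibly non-metrizable setting of $K_0$. I would justify it directly from Definition \ref{DefinitionANR}: embed $K_0$ into a Tychonoff cube $[0,1]^\Gamma$, which is a locally connected compactum; by the ANR property there is a retraction $r$ from an open neighbourhood $U_0$ of $K_0$ onto $K_0$, and since $U_0$ inherits local connectedness from the cube, pushing a connected open neighbourhood through $r$ produces connected neighbourhoods in $K_0$ (using $r|_{K_0}=\mathrm{id}$). The remaining verifications, that $K_0$ really is the Stone space and that the sign retraction is continuous for the product topology, are routine.
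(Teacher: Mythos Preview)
Your proof is correct and follows essentially the same approach as the paper: both isolate the same closed subset $K_0=\{x^*\in K_\mathbb{B}:x^*(m)=-1,\ x^*(M)=1\}$, use the same open set $U=\{x^*:x^*(m)<0<x^*(M)\}$ and the same sign retraction $U\to K_0$, and then invoke Lemma~\ref{LemmaANR} to transfer the ANR property to $K_0$ for a contradiction. The only difference is the final step: the paper cites \cite[Proposition~11.9]{dPW15} to rule out infinite totally disconnected ANRs, whereas you supply a direct argument via local connectedness of the Tychonoff cube, which is a perfectly valid (and arguably more self-contained) way to reach the same conclusion.
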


\begin{proof}
	By Theorem \ref{theoCharacterizationProjectivity},  it is enough to show that $K_\mathbb{B} = \{ x^* \in \mathbb{B}^* : \max\{|x^*(m)|,|x^*(M)|\} = 1 \}$ is not ANR, where $m$ denotes the minimum and $M$ the maximum of $\mathbb{B}$.
	Let $K_0 = \{ x^* \in K_\mathbb{B} : x^*(m) = -1 \text{ and }x^*(M) = 1 \}$, and $U = \{ x^* \in K_\mathbb{B} : x^*(m) < 0 \text{ and }x^*(M) > 0 \}$. Note that $U$ is an open set in $K_\mathbb{B}$ which contains $K_0$. Moreover, notice that $x^*(x) \in [-1,0)\bigcup(0,1]$ for every $x^* \in U$ and every $x \in \mathbb{B}$, since homomorphisms in $\mathbb{B}^*$ takes at most two values.

	Suppose, by contradiction, that $K_\mathbb{B}$ is an ANR. The map $P \colon U \longrightarrow K_0$ given by $P(x^*)(x) = 1$ if $x^*(x) > 0$ and $P(x^*)(x) = -1$ otherwise, for every $x^* \in U$, $x \in \mathbb{B}$, is continuous and satisfies that $P \vert_{K_0} = id_{K_0}$, so $K_0$ is a retract of $U \subset K_\mathbb{B}$, and thus, $K_0$ is a neighborhood retract of $K_\mathbb{B}$. By Lemma \ref{LemmaANR}, $K_0$ is an infinite totally disconnected ANR, which contradicts \cite[Proposition 11.9]{dPW15}.
\end{proof}

\section*{Acknowledgements}
Research partially supported by Fundaci\'{o}n S\'{e}neca [20797/PI/18] and by project MTM2017-86182-P (Government of Spain, AEI/FEDER, EU). The research of G. Mart\'inez-Cervantes
was co-financed by the European Social Fund and the Youth European Initiative under Fundaci\'on S\'eneca
[21319/PDGI/19]. J. D. Rodr\'iguez Abell\'an was also supported by project 20262/FPI/17. Fundaci\'on S\'eneca. Regi\'on de Murcia (Spain). The research of A. Rueda Zoca was also supported by Juan de la Cierva-Formaci\'on fellowship FJC2019-039973, by MICINN (Spain) Grant PGC2018-093794-B-I00 (MCIU, AEI, FEDER, UE), by Junta de Andaluc\'ia Grant A-FQM-484-UGR18 and by Junta de Andaluc\'ia Grant FQM-0185.


\begin{thebibliography}{99}
	\bibitem {AliprantisBurkinshaw} \textsc{C. D. Aliprantis and O. Burkinshaw}, \textit{Positive Operators}, Handbook of the
	Geometry of Banach Spaces, Springer, 2006. 
	
	
	\bibitem{AA} \textsc{Y. A. Abramovich and C. D. Aliprantis}, \textit{An Invitation to Operator Theory}, Graduate Studies in Mathematics, 2002.
	
	\bibitem {amr20projectivity} \textsc{A. Avil\'es, G. Mart\'inez-Cervantes and J. D. Rodr\'iguez Abell\'an}, \textit{On projective Banach lattices of the form $C(K)$ and $FBL[E]$}, J. Math. Anal. Appl. 489, 124129 (2020).
	
	\bibitem {amr20c0} \textsc{A. Avil\'es, G. Mart\'inez-Cervantes and J. D. Rodr\'iguez Abell\'an}, \textit{On the Banach lattice $c_0$}, Rev. Mat. Comp. (2020). https://doi.org/10.1007/s13163-019-00342-x
	
	\bibitem{ART18} \textsc{A. Avil\'es, J. Rodr\'iguez and P. Tradacete}, \textit{The free Banach lattice generated by a Banach space}, J. Funct. Anal. 274 (2018), 2955--2977.
	
	\bibitem {amrt20} 	\textsc{A. Avil\'es, G. Mart\'inez-Cervantes, J.Rodr\'iguez and P. Tradacete}, \textit{A Godefroy-Kalton principle for free Banach lattices}, preprint, arXiv:2011.04639.
	
	\bibitem {ara19} \textsc{A. Avil\'es and J. D. Rodr\'iguez Abell\'an}, \textit{Projectivity of the free Banach lattice generated by a lattice}, Archiv der Mathematik 113 (2019), 515--524.
	
	\bibitem{ARA18} \textsc{A. Avil\'es and J. D. Rodr\'iguez Abell\'an}, \textit{The free Banach lattice generated by a lattice}, Positivity 23 (2019), 581--597.
	
	
	
	\bibitem{dPW15} \textsc{B.\ de Pagter and  A. W.\  Wickstead}, \textit{Free and projective Banach lattices}, Proc. Royal Soc. Edinburgh Sect. A 145 (2015), 105--143.
	

\bibitem {jech} \textsc{K. Hrbacek and T. Jech}, \textit{Introduction to Set Theory, Third Edition, revised and expanded}, Marcel Dekker, INC., 1999.

\bibitem {jjttt} \textsc{H.~Jard\'on-S\'anchez, N.~J.~Lautsen, M.~A.~Taylor, P.~Tradacete and V.~G.~Troitsky}, \textit{Free Banach lattices under convexity conditions}, preprint, arXiv:2101.03510v2.
	
	
	
	
	
	\bibitem{Rodabaugh} \textsc{S.E. Rodabaugh}, \textit{Retracts of Tychonoff and normal spaces}, Fundamenta Mathematicae 79 (1973), 23--31.
	
	\bibitem{Semadeni} \textsc{Z. Semadeni}, \textit{Banach spaces of continuous functions}, vol. 1. Polish Scientific Publishers, 1971.
	
\end{thebibliography}
\end{document}